\newcommand{\be}{\begin{equation}}
\newcommand{\ee}{\end{equation}}
\newcommand{\bes}{\begin{equation*}}
\newcommand{\ees}{\end{equation*}}
\newcommand{\cA}{\mathcal{A}}
\newcommand{\cH}{\mathcal{H}}
\newcommand{\cM}{\mathcal{M}}
\newcommand{\cL}{\mathcal{L}}
\newcommand{\cR}{\mathcal{R}}
\newcommand{\cS}{\mathcal{S}}
\newcommand{\tT}{\tilde{T}}
\newcommand{\Rp}{\mathbb{R}_+}
\newcommand{\Rpt}{\mathbb{R}_+^2}
\newcommand{\diad}{\mathbb{D}^2}
\newcommand{\diadp}{\mathbb{D}_+^2}
\newcommand{\dpp}{\mathbb{D}_{++}}
\newcommand{\diadpp}{\mathbb{D}_{++}^2}
\newcommand{\mb}[1]{\mathbb{#1}}
\begin{document}

\title[E-dilation of commuting CP-semigroups]
{E-dilation of strongly commuting CP-semigroups (the nonunital case)}
\author[Orr M. Shalit]
{Orr Moshe Shalit}

\address{Pure Mathematics Dept.\\
University of Waterloo\\
Waterloo, ON\; N2L--3G1\\
CANADA} \email{oshalit@math.uwaterloo.ca}

\thanks{The author was partly supported by the Gutwirth Fellowship.}

\keywords{CP-semigroup, E-semigroup, two-parameter, quantum Markov semigroup, dilation, product system, completely contractive representation, isometric dilation}

\subjclass[2000]{46L55, 46L57, 46L08}

\begin{abstract}
In a previous paper, we showed that every strongly commuting pair of CP$_0$-semigroups
on a von Neumann algebra (acting on a separable Hilbert space) has an E$_0$-dilation. In this paper we show that if one restricts attention
to the von Neumann algebra $B(H)$ then the unitality assumption can be dropped, that is, we prove that
every pair of strongly commuting CP-semigroups on $B(H)$ has an E-dilation. The proof is significantly different from the proof for the unital case,
and is based on a construction of Ptak from the 1980's designed originally for constructing
a unitary dilation to a two-parameter contraction semigroup.
\end{abstract}
 
\maketitle

%
\newtheorem{theorem}{Theorem}[section]
\newtheorem{lemma}[theorem]{Lemma}
\newtheorem{proposition}[theorem]{Proposition}
\newtheorem{conjecture}[theorem]{Conjecture}
\newtheorem{example}[theorem]{Example}
\newtheorem{defin}[theorem]{Definition}
\newtheorem{corollary}[theorem]{Corollary}
\newtheorem{remark}[theorem]{Remark}
\newtheorem{remarks}[theorem]{Remarks}
\newtheorem{notation}[theorem]{Notation}
\newtheorem{exremark}[theorem]{Extended Remark and Notation}
\newtheorem{conclremarks}[theorem]{Concluding Remarks}

\newtheorem*{Prov}{Provision}
\newtheorem {mythm}{Theorem}[section]
\newtheorem {mylem}{Lemma}
\newtheorem*{idfn}{Definition}
\newenvironment{definition}{\begin{idfn}
\rm}{\end{idfn}}

\section{Introduction}
Let $H$ be a separable Hilbert space, and let $\cM \subseteq B(H)$ be a von Neumann algebra. A \emph{CP-semigroup} on $\cM$ is a family $P = \{P_t\}_{t\geq0}$ of contractive normal completely positive maps on $\cM$ satisfying the semigroup property
$$P_{s+t}(a) = P_s (P_t(a)) \,\, ,\,\, s,t\geq 0, a\in \cM ,$$
$$P_{0}(a) = a \,\, , \,\,  a\in \cM ,$$
and the continuity condition
$$\lim_{t\rightarrow t_0} \langle P_t(a)h,g\rangle = \langle P_{t_0}(a)h,g\rangle \,\, , \,\, a\in \cM, h,g \in H .$$ 
A CP-semigroup is called an \emph{E-semigroup} if each of
its elements is a $*$-endomorphism. A CP-semigroup is called \emph{unital} if for all $t$, $P_t$ is a unit preserving map. Unital CP-semigroups (E-semigroups) are called CP$_0$-semigroups (E$_0$-semigroups). CP$_0$-semigroups are also referred to sometimes as \emph{Markov semigroups}. For a thorough exposition on E-semigroups, containing a serious discussion on CP-semigroups,
see \cite{Arv03}.

Given a CP-semigroup $P$ we say that a
quadruple $(K,u,\cR,\alpha)$ is an \emph{E-dilation} of
$P$ if $K$ is a Hilbert space, $u: H \rightarrow K$ is an
isometry, $\cR$ is a von Neumann algebra satisfying $u^* \cR u =
\cM$, and $\alpha$ is an E-semigroup such that
\be\label{eq:dilation}
P_t(u^* b u) = u^* \alpha_t (b) u \,\, , \,\, b \in \cR
\ee
for all
$t \geq 0$. It has been proved by several authors that every CP-semigroup has an
E-dilation: Bhat \cite{Bhat96} (see also \cite{Bhat99}), SeLegue \cite{SeLegue}, Bhat--Skeide \cite{BS00}, Muhly--Solel \cite{MS02} and Arveson \cite{Arv03} (some of the authors require that $P$ be unital, or that $\cM = B(H)$).

In \cite{ShalitCP0Dil} we raised the question whether every \emph{two-parameter} CP$_0$-semigroup
has a (two-parameter) E$_0$-dilation. We showed that
given a pair $R = \{R_t\}_{t\geq 0}$ and $S = \{S_t\}_{t\geq 0}$ of \emph{strongly commuting} CP$_0$-semigroups
on a von Neumann algebra $\cM \subseteq B(H)$, there exists a dilation $(K,u,\cR,\alpha)$ (minimal in a certain sense), with $K$,$u$, and $\cR$ as in the previous paragraph and $\alpha = \{\alpha_t\}_{t \in \Rpt}$ a two-parameter E$_0$-semigroup satisfying (\ref{eq:dilation}) for all $t\in\Rpt$, where $P$ is the two-parameter semigroup given by
$$P_{(s,t)} = R_s \circ S_t .$$
We postpone the definition of strong commutativity to Section \ref{sec:preliminaries} below, but we mention straight away as an example that if $R = \{R_t\}_{t\geq 0}$ is a CP-semigroup, $S = \{S_t\}_{t\geq 0}$ is a semigroup of normal $*$-automorphisms and $R$ and $S$ commute, then $R$ and $S$ strongly commute.

In this paper we try to drop the unitality assumption on $R$ and $S$. In order to do so, we have to restrict attention to the von Neumann algebra $\cM = B(H)$.
It is interesting that dropping the assumption on unitality forces us to completely change approximately one-half of the proof, and we have to introduce new methods for both the algebraic and the analytic parts. Much of the results in this paper hold for semigroups acting on von Neumann algebras more general than $B(H)$, but the bottom line -- the existence of an E-dilation -- is proved only for $\cM = B(H)$.

It is worth noting that in \cite{Bhat98}, Bhat showed that given a pair of commuting CP maps $\Theta$ and $\Phi$ on $B(H)$, there is a Hilbert space $K\supseteq H$ and a pair of commuting normal $*$-emdomorphisms $\alpha$ and $\beta$ acting on $B(K)$ such that
\bes
\Theta^m \circ \Phi^n (u^* b u) = u^* \alpha^m \circ \beta^n (b) u \,\, , \,\, b \in B(K)
\ees
for all $m,n \in \mathbb{N}$ (here $u$ denotes the inclusion of $H$ onto $K$). Later on Solel, using a different method, proved this result for commuting CP maps on arbitrary von Neumann algebras \cite{S06a}. Neither one of the above results requires strong commutativity. 

{\bf Overview of the paper.} In Section \ref{sec:preliminaries} we give some preliminary material in product systems of W$^*$-correspondences, representations of product systems, E-dilations of CP-semigroups and strong commutation. We also prove a new chararcterization of strong commutation in terms of the GNS representations.

Section \ref{sec:repSC} is a review of our constructions from \cite{ShalitCP0Dil} (which are, in turn, based on the constructions of Muhly and Solel from \cite{MS02}), which allow one to represent a pair of strongly commuting CP-semigroups via a product system representation in the form of
\bes
R_s\circ S_t (a) = \widetilde{T}_{(s,t)} \left(I_{X({(s,t)})}
\otimes a \right) \widetilde{T}^*_{(s,t)} ,
\ees
where $X = \{X({(s,t)})\}_{{(s,t)}\in\Rpt}$ is a product system of W$^*$-correspondences and $T$ is completely contractive covariant representation of $X$ on $H$.

In Section \ref{sec:iso_dil} we develop a method invented by Ptak to obtain an
isometric dilation of a product system representation over
$$\diadp := \left\{\left(\frac{k}{2^{n}},\frac{m}{2^{n}}\right) : k,m,n = 0,1,2,\ldots \right\} $$
(Ptak used this method to prove that every two-parameter semigroup of contractions on a separable Hilbert space has a unitary dilation, see \cite{Ptak}). The results of this section rely also on two papers of B. Solel, \cite{S06a} and \cite{S06b}, containing results on dilations of product system representations over $\mathbb{N}^2$.

Section \ref{sec:extension} is devoted to the problem of extending a CP-semigroup $\phi = \{\phi_s\}_{s\in\cS}$ parameterized by a dense subsemigroup $\cS \subseteq \Rp$ to a CP-semigroup $\hat{\phi} = \{\hat{\phi_t}\}_{t\geq 0}$ parameterized by $\Rp$. Using some ideas of SeLegue \cite{SeLegue} and Arveson \cite{Arv03}, we show that for a certain class of von Neumann algebras, including $B(H)$, such an extension always exists.

The construction of an E-dilation to a given CP-semigroup is done in Section \ref{sec:Edilation}, by using the results from Sections \ref{sec:repSC}--\ref{sec:extension}. It is noteworthy that, due to the non-unitality of the CP-semigroup,
this part of the proof is considerably trickier than the analogous part in the proof of existence of dilations for
unital CP-semigroups. As a corollary we prove that every pair of commuting CP-semigroups, one of which is a semigroup of $*$-automorphisms, has a minimal E-dilation.

\section{Preliminaries}\label{sec:preliminaries}

\subsection{$C^*$/$W^*$-correspondences, their products and their representations}\label{subsec:correspondences}
\begin{defin}
Let $\cA$ be a $C^*$-algebra. A \emph{Hilbert
$C^*$-correspondences over $\cA$} is a (right) Hilbert
$\cA$-module $E$ which carries an adjointable, left action of
$\cA$.
\end{defin}
\begin{defin}
Let $\cM$ be a $W^*$-algebra. A \emph{Hilbert
$W^*$-correspondence over $\cM$} is a self-adjoint Hilbert
$C^*$-correspondence $E$ over $\cM$, such that the canonical map from $\cM$ into the W$^*$-algebra $\cL(E)$ (which gives rise to the left action) is normal.
\end{defin}
\begin{defin}
Let $E$ be a $C^*$-correspondence over $\cA$, and let $H$ be a
Hilbert space. A pair $(\sigma, T)$ is called a \emph{completely
contractive covariant representation} of $E$ on $H$ (or, for
brevity, a \emph{c.c. representation}) if
\begin{enumerate}
    \item $T: E \rightarrow B(H)$ is a completely contractive linear map;
    \item $\sigma : A \rightarrow B(H)$ is a nondegenerate $*$-homomorphism; and
    \item $T(xa) = T(x) \sigma(a)$ and $T(a\cdot x) = \sigma(a) T(x)$ for all $x \in E$ and  all $a\in\cA$.
\end{enumerate}
If $\cA$ is a $W^*$-algebra and $E$ is $W^*$-correspondence then
we also require that $\sigma$ be normal.
\end{defin}
Given a $C^*$-correspondence $E$ and a c.c. representation
$(\sigma,T)$ of $E$ on $H$, one can form the Hilbert space $E
\otimes_\sigma H$, which is defined as the Hausdorff completion of
the algebraic tensor product with respect to the inner product
$$\langle x \otimes h, y \otimes g \rangle = \langle h, \sigma (\langle x,y\rangle) g \rangle .$$
One then defines $\widetilde{T} : E \otimes_\sigma H \rightarrow H$ by
$$\widetilde{T} (x \otimes h) = T(x)h .$$

\begin{defin}
A c.c. representation $(\sigma, T)$ is called \emph{isometric} if
for all $x, y \in E$,
\begin{equation*}
T(x)^*T(y) = \sigma(\langle x, y \rangle) .
\end{equation*}
(This is the case if and only if $\widetilde{T}$ is an isometry.) It
is called \emph{fully coisometric} if $\widetilde{T}$ is a coisometry.
\end{defin}

Given two Hilbert $C^*$-correspondences $E$ and $F$ over $\cA$,
the \emph{balanced} (or \emph{inner}) tensor product $E
\otimes F$ is a Hilbert $C^*$-correspondence over $\cA$
defined to be the Hausdorff completion of the algebraic tensor
product with respect to the inner product
$$\langle x \otimes y, w \otimes z \rangle = \langle y , \langle x,w\rangle \cdot z \rangle \,\, , \,\,  x,w\in E, y,z\in F .$$
The left and right actions are defined as $a \cdot (x \otimes y) =
(a\cdot x) \otimes y$ and $(x \otimes y)a = x \otimes (ya)$,
respectively, for all $a\in A, x\in E, y\in F$. When working
in the context of $W^*$-correspondences, that is, if $E$ and $F$
are $W$*-correspondences and $\cA$ is a $W^*$-algebra, then $E
\otimes F$ is understood to be the (unique minimal) \emph{self-dual
extension} of the above construction (see \cite{Pas}).

Suppose $\cS$ is an abelian cancellative semigroup with identity
$0$ and $p : X \rightarrow \cS$ is a family of
$W^*$-correspondences over $\cA$. Write $X(s)$ for the
correspondence $p^{-1}(s)$ for $s \in \cS$. We say that $X$ is a
\emph{product system} over $\cS$ if $X$ is a semigroup,
$p$ is a semigroup homomorphism and, for each $s,t \in \cS
\setminus \{0\}$, the map $X(s) \times X(t) \ni (x,y) \mapsto xy
\in X(s+t)$ extends to an isomorphism $U_{s,t}$ of correspondences
from $X(s) \otimes X(t)$ onto $X(s+t)$. The associativity of
the multiplication means that, for every $s,t,r \in \cS$,
\begin{equation}\label{eq:assoc_prod}
U_{s+t,r} \left(U_{s,t} \otimes I_{X(r)} \right) = U_{s,t+r} \left(I_{X(s)} \otimes U_{t,r} \right).
\end{equation}
We also require that $X(0) = \cA$ and that the multiplications
$X(0) \times X(s) \rightarrow X(s)$ and $X(s) \times X(0)
\rightarrow X(s)$ are given by the left and right actions of $\cA$
and $X(s)$.

\begin{defin}
Let $H$ be a Hilbert space, $\cA$ a $W^*$-algebra and $X$ a
product system of Hilbert $\cA$-correspondences over the semigroup
$\cS$. Assume that $T : X \rightarrow B(H)$, and write $T_s$ for
the restriction of $T$ to $X(s)$, $s \in \cS$, and $\sigma$ for
$T_0$. $T$ (or $(\sigma, T)$) is said to be a \emph{completely
contractive covariant representation} of $X$ if
\begin{enumerate}
    \item For each $s \in \cS$, $(\sigma, T_s)$ is a c.c. representation of $X(s)$; and
    \item $T(xy) = T(x)T(y)$ for all $x, y \in X$.
\end{enumerate}
$T$ is said to be an isometric (fully coisometric) representation if it is an isometric (fully coisometric) representation on every fiber $X(s)$.
\end{defin}
Since we shall not be concerned with any other kind of representation, we shall call a completely contractive covariant representation of a product system simply a \emph{representation}.

\subsection{CP-semigroups and E-dilations}

Let $\cS$ be a unital subsemigroup of $\Rp^k$, and let $\cM$ be a
von Neumann algebra acting on a Hilbert space $H$. A \emph{CP map}
is a completely positive, contractive and normal map on $\cM$. A
\emph{CP-semigroup over $\cS$} is a family $\{P_s\}_{s \in
\cS}$ of CP maps on $\cM$ such that
\begin{enumerate}
    \item For all $s,t \in \cS$
    $$P_s \circ P_t = P_{s + t} \,;$$
    \item $P_0 = {\bf id}_{\cM}$;
    \item For all $h,g\in H$ and all $a\in \cM$, the function
    $$\cS\ni s \mapsto \langle P_s(a) h,g \rangle $$
    is continuous.
\end{enumerate}
A CP-semigroup is called an \emph{E-semigroup} if it consists of $*$-endomorphisms. A CP (E) - semigroup is
called a \emph{CP$_0$ (E$_0$)-semigroup} if all its elements are unital.

\begin{defin}\label{def:dilation}
Let $\cM$ be a von Neumann algebra of operators acting on a
Hilbert space $H$, and let $P = \{P_s\}_{s \in \cS}$ be
a CP-semigroup over the semigroup $\cS$. An \emph{E-dilation of
$P$} (or, simply, a \emph{dilation of $P$}) is a quadruple $(K,u,\cR,\alpha)$, where $K$ is a
Hilbert space, $u: H \rightarrow K$ is an isometry, $\cR$ is a von
Neumann algebra satisfying $u^* \cR u = \cM$, and $\alpha$ is an
E-semigroup over $\cS$ such that \be\label{eq:CPdef_dil1} P_s
(u^* a u) = u^* \alpha_s (a) u \,\, , \,\, a \in \cR \ee for all
$s \in \cS$.

If $(K,u,\cR,\alpha)$ is a dilation of $P$, then $(\cM,
P)$ is called a \emph{compression} of $(K,u,\cR,\alpha)$.
\end{defin}

If one identifies $\cM$ with $u \cM u^*$, $H$ with $u H$,
and defines $p := u u^*$,
one may give the following equivalent definition,
which we shall use interchangeably with definition
\ref{def:dilation}: \emph{a triple $(p,\cR,\alpha)$ is called a
\emph{dilation} of $P$ if $\cR$ is a von Neumann algebra
containing $\cM$, $\alpha$ is an E-semigroup on $\cR$ and $p$ is a
projection in $\cR$ such that $\cM = p \cR p$ and
$$P_s (p a p) = p \alpha_s (a) p $$
holds for all $s \in \cS, a \in \cR$.}

With this change of notation, one easily sees that for all $s \in \cS$, $\alpha_s (1-p) \leq 1 - p$.
A projection with this property is called \emph{coinvariant} (for $\alpha$).

\begin{defin}\label{def:min_dil}
Let $(K,u,\cR,\alpha)$ be an E-dilation of the CP-semigroup
$P$.  $(K,u,\cR,\alpha)$ is said to be a \emph{minimal} dilation
if there is no multiplicative, coinvariant projection $1 \neq q
\in \cR$ such that $u u^* \leq q$, and if
\be\label{eq:W^*-generator} 
\cR = W^*\left(\bigcup_{s \in \cS}
\alpha_s (u \cM u^*)\right) . 
\ee
\end{defin}

In \cite{Arv03} Arveson defines a minimal dilation (for a CP-semigroup over $\Rp$) slightly
differently:
\begin{defin}\label{def:min_dil_Arv}
Let $(K,u,\cR,\alpha)$ be an E-dilation of the CP-semigroup
$P$.  $(K,u,\cR,\alpha)$ is said to a \emph{minimal} dilation
if the central support of $u u^*$ in $\cR$ is $1$, and if
(\ref{eq:W^*-generator}) holds.
\end{defin}
The two definitions have been shown to be equivalent in the case
where $P$ is a CP$_0$-semigroup over $\Rp$ (\cite{Arv03}, Section 8.9). The following proposition appeared in \cite[Subsection 2.2]{ShalitCP0Dil}, where a longer discussion of minimality is presented.

\begin{proposition}\label{prop:equiv_def_min}
Definition \ref{def:min_dil} holds if \ref{def:min_dil_Arv} does.
\end{proposition}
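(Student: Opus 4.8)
The statement to prove is Proposition \ref{prop:equiv_def_min}: that Definition \ref{def:min_dil} holds whenever Definition \ref{def:min_dil_Arv} holds. Both definitions share the requirement (\ref{eq:W^*-generator}), so the entire content is the implication between the two minimality conditions on the projection $p = uu^*$: namely, that if the central support of $p$ in $\cR$ equals $1$, then there is no multiplicative coinvariant projection $q \in \cR$ with $q \neq 1$ and $p \leq q$.

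The plan is to argue by contraposition: assume such a $q$ exists and show that the central support of $p$ cannot be $1$. The key observation is that a projection $q$ which is both multiplicative (i.e.\ $q$ commutes with $\alpha_s(a)$ appropriately, so that $a \mapsto qaq$ is multiplicative on $\cR$, equivalently $q$ reduces $\cR$ in the relevant sense) and coinvariant for $\alpha$ should be shown to be central in $\cR$, or at least to dominate the central support of $p$. First I would unpack precisely what \emph{multiplicative} means for the projection $q$ in this context: the compression $a \mapsto qaq$ being a homomorphism forces $q$ to lie in the multiplicative domain, which typically means $q$ commutes with all of $\cR$, i.e.\ $q$ is central. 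Once $q$ is seen to be a central projection, the condition $p \leq q$ with $q \neq 1$ immediately gives that the central support of $p$ is dominated by $q$, hence is $\neq 1$, contradicting Definition \ref{def:min_dil_Arv}.

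The central support $z(p)$ of $p$ is the smallest central projection dominating $p$; so the crux is establishing that any multiplicative coinvariant projection $q$ with $p \leq q$ is central, whence $z(p) \leq q < 1$. I would verify centrality by combining the multiplicativity condition (which constrains $q$ to commute with enough of $\cR$) with the fact that $\cR$ is generated as a von Neumann algebra by $\bigcup_{s} \alpha_s(p\cR p)$ via (\ref{eq:W^*-generator}); coinvariance, $\alpha_s(1-q) \leq 1-q$, controls how $q$ interacts with the generating images $\alpha_s(\cdot)$. The main obstacle I anticipate is pinning down the exact sense of ``multiplicative projection'' and proving from it that $q$ commutes with each generator $\alpha_s(a)$ for $a \in p\cR p$, since once commutation with the generators is in hand, centrality follows because the generators generate $\cR$.

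I expect this argument to be short, as befits a proposition whose proof is relegated to a reference in \cite{ShalitCP0Dil}; the substantive work is the translation of ``multiplicative coinvariant projection dominating $p$'' into ``central projection dominating $p$.'' I would therefore present the proof as: (i) recall that the shared hypothesis (\ref{eq:W^*-generator}) need not be re-examined; (ii) take $q$ as in the negation of Definition \ref{def:min_dil} and show it is central; (iii) conclude $z(p) \leq q \neq 1$, contradicting Definition \ref{def:min_dil_Arv}.
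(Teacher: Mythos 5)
Your skeleton --- argue by contraposition, show that a multiplicative coinvariant projection $q\neq 1$ dominating $p=uu^*$ must be central, then conclude $z(p)\leq q\neq 1$ against Definition \ref{def:min_dil_Arv} --- is indeed the skeleton of the proof this paper points to (the paper itself gives no argument; it defers to \cite[Subsection 2.2]{ShalitCP0Dil}). But your proposal has a genuine gap exactly at the point you yourself flag as ``the main obstacle'': the centrality of $q$ is never actually derived, and the shortcut you lean on rests on the wrong reading of \emph{multiplicative}. If ``multiplicative'' meant that $a\mapsto qaq$ is a homomorphism on all of $\cR$, then $q$ would be central by the standard two-line multiplicative-domain argument, coinvariance and the generation condition (\ref{eq:W^*-generator}) would play no role whatsoever, and the proposition would be an empty tautology (one would never attach a special name to such projections). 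In the sense relevant here, $q$ is multiplicative when the \emph{compressed semigroup} maps $q\cR q\ni a\mapsto q\alpha_s(a)q$ are multiplicative, i.e., when the compression of $(K,\cR,\alpha)$ by $q$ is again an E-dilation of $P$; such a $q$ is not formally central, and proving that it is central (using the generation hypothesis) is the entire content of the proposition.

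The missing computation, which is the heart of the cited proof, runs as follows. Since $p\leq q$, every element $pap$ ($a\in\cR$) lies in $q\cR q$, so multiplicativity of $a\mapsto q\alpha_s(a)q$ together with multiplicativity of $\alpha_s$ gives, for all $a,b\in\cR$ and $s\in\cS$,
\begin{equation*}
\bigl(q\alpha_s(pap)q\bigr)\bigl(q\alpha_s(pbp)q\bigr)=q\alpha_s\bigl(pap\cdot pbp\bigr)q=q\alpha_s(pap)\alpha_s(pbp)q ,
\end{equation*}
whence $q\alpha_s(pap)(1-q)\alpha_s(pbp)q=0$. Taking $b=a^*$ this reads $\bigl((1-q)\alpha_s(pa^*p)q\bigr)^*\bigl((1-q)\alpha_s(pa^*p)q\bigr)=0$, so $(1-q)\alpha_s(pa^*p)q=0$ for every $a$; taking adjoints and replacing $a$ by $a^*$ shows that $q$ commutes with every $\alpha_s(pap)$. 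Since $u\cM u^*=p\cR p$, condition (\ref{eq:W^*-generator}) says these elements generate $\cR$ as a von Neumann algebra, so $q$ is central; only now does your final step apply: $p\leq q$ and centrality give $z(p)\leq q\neq 1$, contradicting the hypothesis $z(p)=1$. (Note that coinvariance is not even needed in this computation; it is what makes the compression a semigroup, hence makes the definition of ``multiplicative'' meaningful.) So your plan points in the right direction, but without the displayed identity it does not yet constitute a proof.
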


\subsection{Strong commutativity}

Let $\Theta$ and $\Phi$ be CP maps on a von Neumann algebra $\cM \subseteq B(H)$. We define the Hilbert
space $\cM \otimes_\Phi \cM \otimes_\Theta H$ to be the Hausdorff
completion of the algebraic tensor product $\cM
\otimes_\textrm{alg} \cM \otimes_\textrm{alg} H$ with respect to
the inner product
$$\langle a \otimes b \otimes h, c \otimes d \otimes k \rangle = \langle h, \Theta(b^* \Phi(a^* c) d) k \rangle .$$
\begin{defin}\label{def:SC}
Let $\Theta$ and $\Phi$ be CP maps on $\cM$. We say that they
\emph{commute strongly} if there is a unitary $u: \cM \otimes_\Phi
\cM \otimes_\Theta H \rightarrow \cM \otimes_\Theta \cM
\otimes_\Phi H$ such that:
\begin{itemize}
\item[(i)] $u(a \otimes_\Phi I \otimes_\Theta h) = a \otimes_\Theta I \otimes_\Phi h$ for all $a \in \cM$ and $h \in H$.
\item[(ii)] $u(ca\otimes_\Phi b \otimes_\Theta h) = (c \otimes I_M \otimes I_H)u(a\otimes_\Phi b \otimes_\Theta h)$ for $a,b,c \in \cM$ and $h \in H$.
\item[(iii)] $u(a\otimes_\Phi b \otimes_\Theta dh) = (I_M \otimes I_M \otimes d)u(a\otimes_\Phi b \otimes_\Theta h)$ for $a,b \in \cM$, $d \in \cM'$ and $h \in H$.
\end{itemize}
\end{defin}
The notion of strong commutation was introduced by Solel in
\cite{S06a}. Note that if two CP maps commute strongly, then they
commute. The appendix of \cite{ShalitCP0Dil} contains many examples of strongly commuting CP maps, and also particular necessary and sufficient conditions for strong commutativity to hold in particular von Neumann algebras. Let us recall the following characterization due to Solel of strongly commuting CP maps on $\cM = B(H)$.

Let $\Theta$ and $\Phi$ be two
CP maps on $B(H)$. It is well known that there are two
$\ell^2$-independent row contractions $\{T_{i}\}_{i=1}^{m}$ and
$\{S_{j}\}_{j=1}^{n}$ ($m,n$ may be equal to $\infty$) such that for all $a \in B(H)$
\be\label{eq:conj1}
\Theta(a) = \sum_{i}T_{i}aT_{i}^* ,
\ee
and
\be\label{eq:conj2}
\Phi(a) = \sum_{j}S_{j}aS_{j}^* \,.
\ee

\begin{theorem}\label{thm:Solel} (\cite[Proposition 5.8]{S06a})
Let $\Theta$ and $\Phi$ be CP maps on $B(H)$ given by (\ref{eq:conj1}) and (\ref{eq:conj2}) with
$\{T_{i}\}_{i=1}^{m}$ and
$\{S_{j}\}_{j=1}^{n}$ $\ell^2$-independent row contractions.
$\Theta$ and $\Phi$ commute strongly if and only if there is an $mn\times mn$
unitary matrix
$$u = \left(u_{(i,j)}^{(k,l)}\right)_{(i,j),(k,l)}$$
such that for all $i,j$,
\be\label{eq:SCunitary}
T_{i}S_{j} =
\sum_{(k,l)}u_{(i,j)}^{(k,l)}S_{l}T_{k} .
\ee
\end{theorem}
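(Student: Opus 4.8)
The plan is to make the three abstract Hilbert spaces appearing in Definition \ref{def:SC} completely concrete by means of the Kraus representations (\ref{eq:conj1})--(\ref{eq:conj2}), and then to read off the relation (\ref{eq:SCunitary}) by transporting the defining properties (i)--(iii) of the intertwining unitary through these identifications. First I would substitute $\Theta(x)=\sum_i T_i x T_i^*$ and $\Phi(y)=\sum_j S_j y S_j^*$ into the inner product defining $\cM\otimes_\Phi\cM\otimes_\Theta H$. A direct computation gives
\[
\langle a\otimes b\otimes h,\, c\otimes d\otimes k\rangle=\sum_{i,j}\langle a S_j^* b T_i^* h,\; c S_j^* d T_i^* k\rangle,
\]
which shows that the assignment $\Psi(a\otimes b\otimes h)=\sum_{i,j} a S_j^* b T_i^* h\otimes e_i\otimes e_j$ extends to an isometry from $\cM\otimes_\Phi\cM\otimes_\Theta H$ into $H\otimes\mathbb{C}^m\otimes\mathbb{C}^n$, where $\mathbb{C}^m$ denotes $\ell^2(\{1,\dots,m\})$ (and similarly for $n$) when $m$ or $n$ is infinite. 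The symmetric computation on $\cM\otimes_\Theta\cM\otimes_\Phi H$ yields an isometry $\Psi'(a\otimes b\otimes h)=\sum_{i,j} a T_i^* b S_j^* h\otimes e_i\otimes e_j$ into the \emph{same} space.

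The key step, and the place where $\ell^2$-independence is indispensable, is to prove that $\Psi$ and $\Psi'$ are \emph{onto}, so that they are unitaries identifying both tensor products with $H\otimes\mathbb{C}^m\otimes\mathbb{C}^n$. I would compute the orthogonal complement of the range of $\Psi$: a vector $\zeta=\sum_{i,j}\zeta_{ij}\otimes e_i\otimes e_j$ is orthogonal to the range if and only if $\sum_{i,j} T_i b^* S_j a^*\zeta_{ij}=0$ for all $a,b\in B(H)$. Peeling this apart by first applying $\ell^2$-independence of $\{T_i\}$ (for fixed $b$, after checking that the inner sums lie in $\ell^2(H)$) and then of $\{S_j\}$ forces every $\zeta_{ij}=0$; this is essentially the statement that the two Kraus families give minimal Stinespring dilations. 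I expect the main (if routine) effort to be concentrated here, together with the bookkeeping required to justify the $\ell^2$-convergence of all the sums when $m,n=\infty$.

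With the identifications in hand, set $\tilde u:=\Psi'\, w\,\Psi^{-1}$, a unitary on $H\otimes\mathbb{C}^m\otimes\mathbb{C}^n$, where $w$ is the strong-commutation unitary. Under $\Psi$ and $\Psi'$, left multiplication by $c\in\cM$ on the first leg becomes $c\otimes I\otimes I$; hence condition (ii) says that $\tilde u$ commutes with $B(H)\otimes I\otimes I$, which forces $\tilde u=I_H\otimes u$ for a single unitary $u$ on $\mathbb{C}^m\otimes\mathbb{C}^n\cong\mathbb{C}^{mn}$, written $u=(u^{(k,l)}_{(i,j)})$. Condition (iii) is automatic here, since $\cM=B(H)$ gives $\cM'=\mathbb{C}I$, so there is nothing to impose. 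Finally I would feed the vectors $a\otimes I\otimes h$ into condition (i): using $\Psi(a\otimes I\otimes h)=\sum_{i,j}aS_j^*T_i^*h\otimes e_i\otimes e_j$ and the analogous formula for $\Psi'$, the identity $(I_H\otimes u)\,\Psi(a\otimes I\otimes h)=\Psi'(a\otimes I\otimes h)$ becomes, after stripping the arbitrary $a$ and $h$, the operator relation $\sum_{i,j}u^{(k,l)}_{(i,j)}S_j^*T_i^*=T_k^*S_l^*$. Taking adjoints and inverting with the unitarity of $u$ (equivalently, of $\bar u$) yields precisely (\ref{eq:SCunitary}).

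The converse is obtained by running the argument backwards. Given a unitary $u$ satisfying (\ref{eq:SCunitary}), I would define $w:=\Psi'^{-1}(I_H\otimes u)\Psi$, which is a composite of unitaries and hence unitary; it commutes with the left $\cM$-action because $I_H\otimes u$ does, so condition (ii) holds, and (iii) is vacuous as above, while the computation of the previous paragraph, read in reverse, shows that (\ref{eq:SCunitary}) is exactly what is needed for condition (i). Thus $w$ implements the strong commutation of $\Theta$ and $\Phi$.
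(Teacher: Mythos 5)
Your proposal is correct, but there is nothing in the paper to compare it against: the paper states this result with a citation to \cite[Proposition 5.8]{S06a} and gives no proof of it, so what you have produced is a reconstruction of Solel's theorem rather than an alternative to an in-text argument. Checking your steps: the map $\Psi(a\otimes b\otimes h)=\sum_{i,j}aS_j^*bT_i^*h\otimes e_i\otimes e_j$ is isometric because expanding $\Theta$ and $\Phi$ in their Kraus forms turns the inner product of Definition \ref{def:SC} into exactly $\sum_{i,j}\langle aS_j^*bT_i^*h,\,cS_j^*dT_i^*k\rangle$, with row contractivity guaranteeing convergence of all sums; surjectivity does follow from the two-stage peeling you describe, provided one reduces to scalar coefficients by taking $b$ and then $a$ of rank one, which is precisely where $\ell^2$-independence of each family enters; the commutant identity $\left(B(H)\otimes I\otimes I\right)'=\mathbb{C}I_H\otimes B(\mathbb{C}^m\otimes\mathbb{C}^n)$ forces $\tilde u=I_H\otimes u$; condition (iii) of Definition \ref{def:SC} is indeed vacuous since $\cM'=\mathbb{C}I$; and the final adjoint-and-invert step is legitimate because the matrix coefficients $\langle T_iS_j\xi,\eta\rangle$ and $\langle S_lT_k\xi,\eta\rangle$ are square-summable, so the unitary matrix can be inverted entrywise against them. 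Compared with Solel's published proof: he works with the Arveson--Stinespring correspondences $E_\Theta$, $E_\Phi$ over $\cM'=\mathbb{C}I$, which $\ell^2$-independence identifies with the multiplicity spaces $\mathbb{C}^m$, $\mathbb{C}^n$, and reads (\ref{eq:SCunitary}) off the flip isomorphism $E_\Theta\otimes E_\Phi\cong E_\Phi\otimes E_\Theta$; you instead trivialize the GNS-type spaces of Definition \ref{def:SC} directly. Both arguments rest on the same two pillars---$\ell^2$-independence to identify abstract spaces with concrete multiplicity spaces, and $B(H)'=\mathbb{C}I$ to collapse the intertwining unitary to a scalar matrix---so yours is best viewed as a self-contained, representation-level rendering of the same idea. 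One point to make explicit in a write-up: in the forward direction you must establish that \emph{both} $\Psi$ and $\Psi'$ are onto before forming $\Psi' w\Psi^{-1}$ and invoking the commutant argument; this is where the standing hypothesis that both families are $\ell^2$-independent is consumed, and it cannot be extracted from strong commutation itself.
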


As a simple example, if $\Phi$ and $\Psi$ are given by (\ref{eq:conj1}) and (\ref{eq:conj2}),
and $S_{j}$ commutes with $T_{i}$ for all $i,j$, then $\Phi$ and $\Psi$ strongly commute. The following
theorem provides us with many more examples of strong commutativity.

\begin{theorem}\label{thm:M_n(C)}
(\cite[Proposition 8.1]{ShalitCP0Dil}) If  $\,\dim H = n < \infty$ is a finite dimensional Hilbert space then any two CP maps on
$B(H) = M_n(\mathbb{C})$ that commute do so strongly.
\end{theorem}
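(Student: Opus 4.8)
The plan is to deduce the criterion of Theorem \ref{thm:Solel} directly from the hypothesis $\Theta\Phi=\Phi\Theta$, using the uniqueness up to a unitary of operator-sum (Kraus) representations of completely positive maps. First I would fix representations of the precise form required by that theorem. Since $\dim H = n < \infty$, each of $\Theta$ and $\Phi$ admits a \emph{minimal} Kraus representation, and minimality forces the Kraus operators to be linearly independent; on a finite-dimensional space linear independence is exactly $\ell^2$-independence. Thus I may write $\Theta$ and $\Phi$ in the form (\ref{eq:conj1}) and (\ref{eq:conj2}) with $\{T_i\}_{i=1}^{m}$ and $\{S_j\}_{j=1}^{r}$ $\ell^2$-independent row contractions and $m,r<\infty$, so that Theorem \ref{thm:Solel} is applicable.

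Second, I would compute the two compositions. One has
$$(\Theta\circ\Phi)(a)=\sum_{i,j}(T_iS_j)\,a\,(T_iS_j)^*,\qquad (\Phi\circ\Theta)(a)=\sum_{i,j}(S_jT_i)\,a\,(S_jT_i)^*.$$
The commutation hypothesis says precisely that these two CP maps coincide. Hence the two families $\{T_iS_j\}_{(i,j)}$ and $\{S_lT_k\}_{(k,l)}$, \emph{each indexed by the same set of $mr$ pairs}, are two Kraus representations of one and the same completely positive map, with the identical (finite) number $mr$ of operators.

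Third, I would invoke the unitary freedom in the operator-sum representation: two Kraus families of a single CP map having the same number $N$ of operators are related by an $N\times N$ unitary matrix. Applying this with $N=mr$ produces a unitary $u=\bigl(u^{(k,l)}_{(i,j)}\bigr)$ satisfying $T_iS_j=\sum_{(k,l)}u^{(k,l)}_{(i,j)}S_lT_k$, which is exactly condition (\ref{eq:SCunitary}). Theorem \ref{thm:Solel} then yields that $\Theta$ and $\Phi$ commute strongly.

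The step that needs the most care, and the reason the argument is routed through Theorem \ref{thm:Solel} rather than attempted by hand, is this last one. The product families $\{T_iS_j\}$ and $\{S_lT_k\}$ are in general \emph{not} linearly independent, so one may not appeal merely to the uniqueness of minimal Kraus decompositions; one needs the general form of the unitary-freedom statement, valid for redundant decompositions, together with the observation that both families are indexed by the \emph{same} finite set, so that the connecting operator is genuinely a square unitary of the size $mr$ demanded by (\ref{eq:SCunitary}). The finite-dimensionality of $H$ enters exactly here: it guarantees $m,r<\infty$, it makes $\ell^2$-independence coincide with linear independence for the representations fed into Theorem \ref{thm:Solel}, and it renders the unitary-freedom theorem available in its clean finite form.
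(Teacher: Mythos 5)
Your proposal is correct and follows essentially the same path as the paper's proof (which is not reproved here but cited as Proposition 8.1 of \cite{ShalitCP0Dil}): there too one verifies criterion (\ref{eq:SCunitary}) of Theorem \ref{thm:Solel} by viewing $\{T_iS_j\}$ and $\{S_lT_k\}$ as two $mr$-element operator-sum representations of the single CP map $\Theta\circ\Phi=\Phi\circ\Theta$ and invoking the unitary freedom of Kraus decompositions, which in finite dimensions requires no independence assumption on the product families. Your explicit flagging of the redundancy issue --- that $\{T_iS_j\}$ need not be linearly independent, so the general (non-minimal) form of the unitary-freedom theorem is what must be used --- is exactly the point requiring care, and you handle it correctly.
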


\begin{example}
(A commuting pair of CP maps on $B(H)$ that do not commute strongly).
\emph{Let $\cM = B(\ell^2(\mathbb{N}))$, and identify every operator with its matrix representation with respect to the standard basis (here $\mathbb{N} = \{0,1,2, \ldots\}$). Let $\Theta$ be the map that takes a matrix to its diagonal, and let $\Phi$ be given by conjugation with the right shift. $\Theta$ is a (unital) CP map, $\Phi$ is a (non-unital) $*$-endomorphism, these two maps commute, but not strongly. Indeed, since $\Phi$ is an endomorphism, $\cM \otimes_\Theta \cM \otimes_\Phi H$ is spanned by simple tensors $A \otimes I \otimes h$, $A \in B(H), h \in H$. But note that
$$\|A \otimes I \otimes e_0\|^2 = \langle e_0, \Phi(\Theta(A^*A)) e_0 \rangle = 0 \,\,,$$
so this space is actually spanned by vectors of the form $A \otimes I \otimes h$, $A \in B(H), h \perp e_0$. Any such simple tensor is in the span of $\{E_{i,j} \otimes I \otimes e_{j+1} \}_{i,j \in \mathbb{N}}$, where $E_{i,j}$ denotes the usual matrix unit with $1$ in the $i,j$-th place, and zeros elsewhere.}

\emph{If $u : \cM \otimes_\Theta \cM \otimes_\Phi H \rightarrow \cM \otimes_\Phi \cM \otimes_\Theta H$ is a candidate for a unitary that satisfies the definition of strong commutation, then we must have
$$u(E_{i,j} \otimes_\Theta I \otimes_\Phi e_{j+1}) = E_{i,j} \otimes_\Phi I \otimes_\Theta e_{j+1} = I \otimes_\Phi E_{i+1,j+1} \otimes_\Theta e_{j+1} \,\,,$$
because $\Phi(E_{i,j})=E_{i+1,j+1}$.
On the other hand, the element $I \otimes_\Phi E_{1,0} \otimes_\Theta e_0$ is not zero:
$$\|I \otimes_\Phi E_{1,0} \otimes_\Theta e_0\|^2 = \langle e_0,\Theta(E_{0,1}\Phi(I) E_{1,0}) e_0 \rangle = \langle e_0, e_0 \rangle = 1\,\,,$$
while it is orthogonal to the image of $u$:}
\begin{align*}
\langle I \otimes_\Phi E_{1,0} \otimes_\Theta e_0, I \otimes_\Phi E_{i+1,j+1} \otimes_\Theta e_{j+1} \rangle
& = \langle e_0, \Theta(E_{0,1}\Phi(I)E_{i+1,j+1})e_0 \rangle \\
& = \langle e_0, \Theta(E_{0,1}E_{i+1,j+1})e_0 \rangle \\
& = \langle e_0, \Theta(\delta_{1,i+1}E_{0,j+1}) e_0 \rangle \\
& = \delta_{1,i+1} \delta_{0,j+1} = 0 \,\,.
\end{align*}
\emph{So $u$ is not onto, and cannot be a unitary. Thus $\Theta$ and $\Phi$ do not commute strongly.}
\end{example}

\subsection{Strong commutativity in terms of the GNS representation}

We now characterize strong commutation using the GNS representation.
This characterization may be interesting for two reasons. First, it shows that
the notion of strong-commutativity is representation free. Second, it provides the connection
between the work of Bhat and Skeide on dilation of one-parameter CP-semigroups \cite{BS00}, and
the constructions made in this paper and in \cite{ShalitCP0Dil}.

Let $\Theta$, $\Phi$ and $\cM$ be as in the previous subsection. We will denote by $(E,\xi)$ and $(F,\eta)$ the GNS representations of $\Theta$ and $\Phi$, respectively. That is $E = \cM \otimes_\Theta \cM$, the correspondence formed with the inner product
$$\langle a \otimes b, a' \otimes b' \rangle = b^* \Theta(a^* a') b',$$
$\xi = 1\otimes 1$ and one checks that $\langle \xi, a \xi \rangle = \Theta(a)$ for all $a \in \cM$. $(F , \eta)$ is defined
similarly. See section 2.14 in \cite{BS00}.

\begin{proposition}\label{prop:SCBS}
$\Theta$ and $\Phi$ commute strongly if and only if there exists a unitary (bimodule map)
$$w:E \otimes F \rightarrow F \otimes E$$
sending $\xi \otimes \eta$ to $\eta \otimes \xi$.
\end{proposition}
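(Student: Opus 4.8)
The plan is to establish a dictionary between the two Hilbert-space tensor products appearing in Definition~\ref{def:SC} and the correspondence tensor products appearing in Proposition~\ref{prop:SCBS}, and then to transport the unitary $u$ across this dictionary to obtain $w$ (and conversely). The key observation is that the GNS correspondence $E = \cM \otimes_\Theta \cM$ and the Hilbert space $\cM \otimes_\Theta H$ are related: since $\cM$ acts on $H$, the space $E \otimes_\Theta H$ (the Hilbert space obtained by amplifying $E$ through the identity representation of $\cM$ on $H$) is canonically isomorphic to $\cM \otimes_\Theta \cM \otimes_\Theta H$, and more to the point, the iterated products $E \otimes F$ and $F \otimes E$, when tensored against $H$, should reproduce exactly the spaces $\cM \otimes_\Phi \cM \otimes_\Theta H$ and $\cM \otimes_\Theta \cM \otimes_\Phi H$ from Definition~\ref{def:SC}. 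So the first step is to write down, carefully, the natural maps
\begin{equation*}
(E \otimes F) \otimes_\iota H \longrightarrow \cM \otimes_\Phi \cM \otimes_\Theta H,
\end{equation*}
where $\iota$ denotes the identity representation of $\cM$ on $H$, and check that they are isometric isomorphisms by comparing the two inner products fiber by fiber.

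Second, I would verify that these identifications intertwine the relevant structure maps. The unitary $w: E \otimes F \to F \otimes E$ is a bimodule map sending $\xi \otimes \eta$ to $\eta \otimes \xi$; amplifying by $\otimes_\iota H$ produces a unitary $w \otimes I_H$ between the two big Hilbert spaces. The content of conditions (i)--(iii) of Definition~\ref{def:SC} is precisely that $u$ fixes the ``vacuum'' vectors $\xi \otimes \eta \otimes h \mapsto \eta \otimes \xi \otimes h$ (this is (i), once one identifies $\xi = 1 \otimes 1$ with $I \otimes_\Phi I$ and similarly for $\eta$), that $u$ is a left $\cM$-module map (condition (ii)), and that $u$ commutes with the right action of $\cM'$, equivalently the action of $H$ (condition (iii)). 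The plan is to show that, under the amplification correspondence, a bimodule map $w$ fixing $\xi \otimes \eta$ gives rise to a map $w \otimes I_H$ satisfying exactly (i)--(iii), and conversely that any $u$ satisfying (i)--(iii) is of the form $w \otimes I_H$ for a unique correspondence-bimodule unitary $w$.

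The forward direction (from $w$ to $u$) is essentially formal: one amplifies and checks the three module-covariance conditions, which follow directly from $w$ being a bimodule map that respects $\xi \otimes \eta \mapsto \eta \otimes \xi$. The harder direction, and what I expect to be the main obstacle, is the converse: recovering an honest correspondence map $w$ on $E \otimes F$ from the Hilbert-space unitary $u$ on $\cM \otimes_\Phi \cM \otimes_\Theta H$. The difficulty is that a unitary between the amplified Hilbert spaces need not, a priori, come from a map on the correspondences themselves; one needs conditions (ii) and (iii) to pin it down. Concretely, condition (iii) (commutation with $\cM'$) is what forces $u$ to be a ``local'' operator expressible as $w \otimes I_H$ via a commutant/Morita-theoretic argument — this is the standard principle that an operator on $E \otimes_\sigma H$ commuting with the appropriate commutant algebra is an amplification of a map on $E$. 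I would lean on the fact that $\cM$ acts on $H$ as its own commutant's commutant, so that $\cM'$-covariance identifies $u$ with $w \otimes I_H$, and then condition (ii) guarantees $w$ is a left-module map while the GNS inner product computation (using $\langle \xi, a \xi\rangle = \Theta(a)$ and the analogous identity for $\eta$) guarantees $w$ is isometric and bimodular. Assembling these, the equivalence follows, with the faithfulness of the identity representation of $\cM$ on $H$ ensuring no information is lost in passing between the correspondence picture and the Hilbert-space picture.
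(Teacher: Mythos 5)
Your proposal is correct and takes essentially the same route as the paper: the paper, too, treats the Hilbert spaces of Definition \ref{def:SC} as amplifications of correspondences, citing \cite[Lemma 4.3]{ShalitCP0Dil} for exactly the descent step you flag as the crux (condition (iii) forcing $u$ to have the form $v \otimes I_H$ for a correspondence unitary $v$), after which its written proof consists of the inner-product computation identifying $\cM \otimes_\Theta \cM \otimes_\Phi \cM$ with $E \otimes F$ --- the same fiber-by-fiber comparison you propose to carry out on the amplified spaces. The one slip is bookkeeping: with the paper's subscript conventions one has $(E \otimes F) \otimes_\iota H \cong \cM \otimes_\Theta \cM \otimes_\Phi H$ (the $\Theta$-contraction coming from $E$ sits innermost), not $\cM \otimes_\Phi \cM \otimes_\Theta H$; the swap is harmless here by symmetry (your $w \otimes I_H$ then implements $u^*$ rather than $u$), but it should be fixed in a full write-up.
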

\begin{proof}
By \cite[Lemma 4.3]{ShalitCP0Dil} and the remarks right after, $\Theta$ and $\Phi$ strongly commute, i.e. there exists a unitary $u$
as in Definition \ref{def:SC}, if and only there exists an isomorphism of $\cM-\cM$-correspondences (a unitary bimodule map)
$$v : \cM \otimes_\Theta \cM \otimes_\Phi \cM \rightarrow \cM \otimes_\Phi \cM \otimes_\Theta \cM $$
such that
$$v(1 \otimes_\Theta 1 \otimes_\Phi 1) = 1 \otimes_\Phi 1 \otimes_\Theta 1 .$$
Here $\cM \otimes_\Theta \cM \otimes_\Phi \cM$ is the W$^*$-correspondence obtained from the tensor product using the inner product
$$\langle a \otimes b \otimes c, a' \otimes b' \otimes c'\rangle = c^* \Phi(b^*\Theta(a^* a')b')c',$$
with the obvious left and right actions of $\cM$\footnote{Proof: identify $\cM \otimes_\Theta \cM \otimes_\Phi H$ with $\cM \otimes_\Theta \cM \otimes_\Phi \cM \otimes_{\bf id} H$, and find $v$ such that $u = v \otimes I$.}.
$\cM \otimes_\Phi \cM \otimes_\Theta \cM$ is defined in the same way. But
$$\cM \otimes_\Theta \cM \otimes_\Phi \cM \cong \left(\cM \otimes_\Theta \cM \right) \otimes \left(\cM \otimes_\Phi \cM \right) = E \otimes F$$
as $W^*$-correspondences via the correspondence isomorphism
$$a \otimes_\Theta b \otimes_\Phi c \mapsto \left(a \otimes_\Theta b \right) \otimes \left(1 \otimes_\Phi c \right).$$
Indeed, this is a well defined isometry because it preserves inner products:
\begin{align*}
\langle \left(a \otimes_\Theta b \right) \otimes \left(1 \otimes_\Phi c \right), \left(a' \otimes_\Theta b' \right) \otimes \left(1 \otimes_\Phi c' \right) \rangle
&= \langle 1 \otimes_\Phi c , b^* \Theta(a^* a') b' \left(1 \otimes_\Phi c' \right)\rangle \\
&= c^* \Phi(b^* \Theta(a^* a') b')c' \\
&= \langle a \otimes b \otimes c, a' \otimes b' \otimes c'\rangle.
\end{align*}
It is onto because the tensor product $E \otimes F$ is balanced. It is clear that this map is a bimodule map. Moreover,
this maps sends $1 \otimes 1 \otimes 1$ to $\xi \otimes \eta$. Thus, the existence of an isomorphism $v$ as above is equivalent to the existence of a an isomorphism
$$w:E \otimes F \rightarrow F \otimes E$$
sending $\xi \otimes \eta$ to $\eta \otimes \xi$.
\end{proof}

\subsection{Strong commutativity of CP-semigroups}

Let $\{R_t\}_{t\geq0}$ and $\{S_t\}_{t\geq0}$ be two semigroups of CP maps such that $R_s$ strongly commutes with $S_t$ for all $s,t \geq 0$. 
This means that there is a family $\{v_{s,t}\}_{(s,t)\in\Rpt}$ of isomorphisms between $\cM$-correspondences $v_{s,t}: \cM \otimes_{R_s} \cM \otimes_{S_t} \cM \rightarrow \cM \otimes_{S_t} \cM \otimes_{R_s} \cM$ sending $I \otimes I \otimes I$ to $I \otimes I \otimes I$.

Fix $s,s',t \geq 0$. We define an isometry
\bes
\cM \otimes_{R_{s+s'}} \cM \otimes_{S_{t}} \cM \rightarrow  \cM \otimes_{R_{s}} \cM \otimes_{R_{s'}} \cM \otimes_{S_{t}} \cM
\ees
by 
\bes
a \otimes_{R_{s+s'}} b \otimes_{S_{t}} c \mapsto a \otimes_{R_{s}} I \otimes_{R_{s'}} b \otimes_{S_{t}} c.
\ees
We also define an isometry
\bes
\cM \otimes_{S_{t}} \cM \otimes_{R_{s+s'}} \cM \rightarrow  \cM \otimes_{S_{t}} \cM \otimes_{R_{s}} \cM \otimes_{R_{s'}} \cM
\ees
by 
\bes
a \otimes_{S_{t}} b \otimes_{R_{s+s'}} c \mapsto a \otimes_{S_{t}} b \otimes_{R_{s}} I \otimes_{R_{s'}} c.
\ees
We make similar definitions with the roles of $R$ and $S$ reversed.

\begin{defin}\label{def:SCSG}
Two semigroups of CP maps $\{R_t\}_{t\geq0}$ and
$\{S_t\}_{t\geq0}$ are said to \emph{commute strongly} if for all
$(s,t) \in \Rpt$ the CP maps $R_s$ and $S_t$ commute strongly, and if there is a family $\{v_{s,t}\}_{(s,t) \in \Rpt}$ of isomorphisms of $\cM$-correspondences $v_{s,t}: \cM \otimes_{R_s} \cM \otimes_{S_t} \cM \rightarrow \cM \otimes_{S_t} \cM \otimes_{R_s} \cM$ (making the $R_s$ and $S_t$ commute strongly) such that for all $s,s',t,t'\geq 0$ the following diagrams commute
\bes
\begin{CD}
\cM \otimes_{R_{s+s'}} \cM \otimes_{S_{t}} \cM @>v_{s+s',t}>> \cM \otimes_{S_{t}} \cM \otimes_{R_{s+s'}} \cM\\
@VVV @VVV\\
\cM \otimes_{R_{s}} \cM \otimes_{R_{s'}} \cM \otimes_{S_{t}} \cM @>(v_{s,t} \otimes I)(I \otimes v_{s',t})>> \cM \otimes_{S_{t}} \cM \otimes_{R_{s}} \cM \otimes_{R_{s'}} \cM
\end{CD}
\ees
and
\bes
\begin{CD}
\cM \otimes_{R_{s}} \cM \otimes_{S_{t+t'}} \cM @>v_{s,t+t'}>> \cM \otimes_{S_{t+t'}} \cM \otimes_{R_{s}} \cM\\
@VVV @VVV\\
\cM \otimes_{R_{s}} \cM \otimes_{S_{t}} \cM \otimes_{S_{t'}} \cM @>(I \otimes v_{s,t'})(v_{s,t} \otimes I)>> \cM \otimes_{S_{t}} \cM \otimes_{S_{t'}} \cM \otimes_{R_{s}} \cM 
\end{CD} 
\ees
where the vertical maps are the isometries from the above discussion.
\end{defin}
Thus, $R$ and $S$ strongly commute if for all $s,t\geq 0$ $R_s$ and $S_t$ commute strongly, and if this strong commutativty is implemented in a way that is consistent with the semigroup structures. We note that in \cite{ShalitCP0Dil} we gave a weaker definition of strongly commuting semigroups, but that definition was too weak. See \cite{OrrCorr}.

We now give some classes of examples of pairs of strongly commuting CP-semigroups.

\begin{proposition}\label{prop:SCendo}
Let $\alpha = \{\alpha_t\}_{t\geq 0}$, $\beta = \{\beta_t\}_{t\geq 0}$ be two E-semigroups acting on $\cM$, and assume that for all $s,t \geq 0$, $\alpha_s \circ \beta_t = \beta_t \circ \alpha_s$. Then $\alpha$ and $\beta$ strongly commute.
\end{proposition}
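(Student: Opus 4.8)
The plan is to exploit the fact that $\alpha_s$ and $\beta_t$ are $*$-endomorphisms, which trivializes all the GNS-type correspondences in Definition \ref{def:SCSG}: each is isomorphic to a right ideal of $\cM$, and under these identifications every structure map in the definition collapses to the identity on one fixed right ideal. Throughout I would write $\gamma(1)$ for the range projection of a CP-endomorphism $\gamma$, and use the two elementary facts $\gamma(a)^*\gamma(b)=\gamma(a^*b)$ and $\gamma(a)\gamma(1)=\gamma(a)$, together with the consequence of commutativity that $\alpha_s\beta_t=\beta_t\alpha_s$ as maps, whence $\alpha_s\beta_t(1)=\beta_t\alpha_s(1)$.

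\emph{Step 1 (realization and pairwise strong commutation).} For a CP-endomorphism $\gamma$ the GNS correspondence $\cM\otimes_\gamma\cM$ is carried isometrically onto the right ideal $\gamma(1)\cM$ by $a\otimes b\mapsto\gamma(a)b$, the left action becoming $c\cdot x=\gamma(c)x$. More generally, with $\Theta=\alpha_s$ and $\Phi=\beta_t$, I would realize $\cM\otimes_\Theta\cM\otimes_\Phi\cM$ inside $\cM$ by $\iota_1:a\otimes b\otimes c\mapsto\Phi\Theta(a)\Phi(b)c$ and $\cM\otimes_\Phi\cM\otimes_\Theta\cM$ by $\iota_2:a\otimes b\otimes c\mapsto\Theta\Phi(a)\Theta(b)c$. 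A direct computation using $\gamma(a)^*\gamma(b)=\gamma(a^*b)$ shows each is inner-product preserving, and setting $a=b=1$ shows each is already onto $q\cM$ with $q:=\beta_t\alpha_s(1)=\alpha_s\beta_t(1)$, so each extends to a unitary of $W^*$-correspondences ($q\cM$ being self-dual). Since commutativity makes both left actions equal to left multiplication by $\Phi\Theta(c)=\Theta\Phi(c)$, the map $v_{s,t}:=\iota_2^{-1}\iota_1$ is a bimodule unitary, and $\iota_1(1\otimes1\otimes1)=q=\iota_2(1\otimes1\otimes1)$ gives $v_{s,t}(1\otimes1\otimes1)=1\otimes1\otimes1$; this is exactly the datum witnessing that $\alpha_s$ and $\beta_t$ commute strongly.

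\emph{Step 2 (the consistency diagrams).} I would realize every one of the four corners of each diagram as the same ideal $q\cM$ by the analogous apply-and-multiply map; for instance $\cM\otimes_{R_s}\cM\otimes_{R_{s'}}\cM\otimes_{S_t}\cM$ is realized by $J_1:a\otimes b\otimes c\otimes d\mapsto\beta_t\alpha_{s+s'}(a)\beta_t\alpha_{s'}(b)\beta_t(c)d$. The key claim is that, after these identifications, each edge of the square becomes the identity on $q\cM$: the inclusion isometries (the vertical maps) do so because the inserted $I$ contributes a range projection absorbed by an identity such as $\alpha_{s+s'}(1)\alpha_{s'}(1)=\alpha_{s+s'}(1)$; the map $v_{s+s',t}$ does so by its definition $\iota_2 v_{s+s',t}=\iota_1$; and the tensor maps $I\otimes v_{s',t}$ and $v_{s,t}\otimes I$ do so by a computation on simple tensors, using $\alpha_{s'}\beta_t=\beta_t\alpha_{s'}$ to commute the outer endomorphism past the inner one. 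Since both paths of the square, followed by the injective realization of the bottom-right corner, then equal the realization of the top-left corner, the two paths agree and the square commutes. The second diagram is handled identically with the roles of $R,S$ and of $s,t$ interchanged.

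\emph{Main obstacle.} The one genuinely delicate point is Step 2's handling of $I\otimes v_{s',t}$ and $v_{s,t}\otimes I$: because $v_{s',t}$ does not send simple tensors to simple tensors, these maps cannot be evaluated naively and I would instead express the relevant four-fold realization $J$ in the factored forms $J(a\otimes\zeta)=\beta_t\alpha_{s'}\alpha_s(a)\,\iota(\zeta)$ and $J(\omega\otimes d)=\alpha_{s'}(\iota(\omega))\,d$, where $\iota$ is the three-fold realization used to define $v$, and then invoke $\iota v=\iota'$ from Step 1. Everything else reduces to the bookkeeping of these projection-absorption identities and to the routine verification that the apply-and-multiply maps are inner-product preserving and algebraically surjective onto $q\cM$, so that their self-dual completions are genuinely unitarily identified with $q\cM$.
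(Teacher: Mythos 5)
Your proposal is correct, and it runs on the same basic mechanism as the paper's proof --- multiplicativity of $\alpha_s$ and $\beta_t$ collapses all the GNS-type correspondences --- but the implementation is genuinely different. The paper never leaves the tensor products: it observes that in $\cM \otimes_{\alpha_s} \cM \otimes_{\beta_t} \cM$ one has $a \otimes b \otimes c = a \otimes I \otimes \beta_t(b)c$, so that tensors of the special form $a \otimes I \otimes c$ are total; it then defines $v_{s,t}$ directly on these, by $a \otimes_{\alpha_s} I \otimes_{\beta_t} c \mapsto a \otimes_{\beta_t} I \otimes_{\alpha_s} c$ (isometric precisely because $\alpha_s \beta_t = \beta_t \alpha_s$), and verifies the two consistency diagrams by chasing such special tensors through both paths. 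This sidesteps entirely what you identified as your main obstacle: on spanning tensors like $a \otimes I \otimes I \otimes c$ the maps $I \otimes v_{s',t}$ and $v_{s,t} \otimes I$ \emph{can} be evaluated naively, since $v$ carries the inner special tensors to simple tensors again. Your route instead realizes every corner of every diagram concretely as a right ideal via apply-and-multiply unitaries, defines $v_{s,t} = \iota_2^{-1}\iota_1$, and shows that every edge of the square becomes the identity of that ideal; the cost is extra bookkeeping (self-duality of $q\cM$, surjectivity, and the factored forms of the four-fold realizations needed to handle $I \otimes v$ and $v \otimes I$ --- all of which do check out), and the payoff is a more structural argument in which commutativity of the diagrams, and the mutual consistency of the whole family $\{v_{s,t}\}$, is manifest rather than the outcome of a tensor computation. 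One small imprecision to fix: the ideal is not literally the same $q\cM$ throughout --- within each diagram all four corners realize onto the ideal of the \emph{total} parameter, e.g.\ $q = \beta_t\alpha_{s+s'}(1)\cM$ in the first diagram, which differs from the $q$ of Step 1; this does not affect the argument.
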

\begin{proof}
We construct a family $\{v_{s,t}\}_{(s,t)\in\Rpt}$ as required by Definition \ref{def:SCSG}.
Note that in $\cM \otimes_{\alpha_s} \cM \otimes_{\beta_t} \cM$ we have the equality $a \otimes_{\alpha_s} b \otimes_{\beta_t} c = a \otimes_{\alpha_s} I \otimes_{\beta_t} \beta_t(b) c$. Thus, there is an isomorphism $v_{s,t} : \cM \otimes_{\alpha_s} \cM \otimes_{\beta_t} \cM \rightarrow \cM \otimes_{\beta_t} \cM \otimes_{\alpha_s} \cM$ completely determined by the mapping
\bes
a \otimes_{\alpha_s} I \otimes_{\beta_t} c \mapsto a \otimes_{\beta_t} I \otimes_{\alpha_s} c.
\ees
Clearly, $v_{s,t}(I \otimes I \otimes I) = I \otimes I \otimes I$. We have yet to show that the family $\{v_{s,t}\}_{(s,t)\in\Rpt}$ participates in commutative diagrams as in Definition \ref{def:SCSG}. Denote by $V$ the isometry $\cM \otimes_{\alpha_{s_1+s_2}} \cM \otimes_{\beta_t} \cM \rightarrow \cM \otimes_{\alpha_{s_1}} \cM \otimes_{\alpha_{s_2}} \cM \otimes_{\beta_t} \cM$ sending $a \otimes b \otimes c$ to $a \otimes I \otimes b \otimes c$, and denote by $W$ the similar isometry $\cM \otimes_{\beta_t} \cM \otimes_{\alpha_{s_1+s_2}} \cM \rightarrow \cM \otimes_{\beta_t} \cM \otimes_{\alpha_{s_1}} \cM \otimes_{\alpha_{s_2}} \cM$. For all $a,c \in \cM$, we have
\begin{align*}
W (v_{s_1+s_2,t} (a \otimes_{\alpha_{s_1+s_2}} I \otimes_{\beta_t} c)) &= W (a \otimes_{\beta_t} I \otimes_{\alpha_{s_1+s_2}} c) \\
&= a \otimes_{\beta_t} I \otimes_{\alpha_{s_1}} I \otimes_{\alpha_{s_2}} c,
\end{align*}
while, on the other hand,
\begin{align*}
(v_{s_1,t} \otimes I)(I \otimes v_{s_2,t}) (V (a \otimes_{\alpha_{s_1+s_2}} I \otimes_{\beta_t} c ))
&= (v_{s_1,t} \otimes I)(I \otimes v_{s_2,t}) (a \otimes_{\alpha_{s_1}} I \otimes_{\alpha_{s_2}} I \otimes_{\beta_t} c) \\
&= (v_{s_1,t} \otimes I) (a \otimes_{\alpha_{s_1}} I \otimes_{\beta_t} I \otimes_{\alpha_{s_2}} c) \\
&= a \otimes_{\beta_t} I \otimes_{\alpha_{s_1}} I \otimes_{\alpha_{s_2}} c .
\end{align*}
That establishes one commutative diagram. The other is similar.
\end{proof}

The importance of the above proposition is that it ensures that if $\{\alpha_t\}_{t\geq0}$ and $\{\beta_t\}_{t\geq0}$ are an E-dilation of $\{R_t\}_{t\geq0}$ and $\{S_t\}_{t\geq0}$ (a pair of strongly commuting CP-semigroups), then $\alpha$ and $\beta$ also commute strongly.

\begin{proposition}\label{prop:autCP}
Let $\alpha = \{\alpha_t\}_{t\geq 0}$ be a semigroup of normal $*$-automorphisms on $\cM$, and let $\theta = \{\theta_t\}_{t\geq 0}$ be a CP-semigroup on $\cM$, and assume that for all $s,t \geq 0$, $\alpha_s \circ \theta_t = \theta_t \circ \alpha_s$. Then $\alpha$ and $\theta$ strongly commute.
\end{proposition}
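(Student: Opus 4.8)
The plan is to imitate the proof of Proposition \ref{prop:SCendo}, constructing an explicit family $\{v_{s,t}\}_{(s,t)\in\Rpt}$ of correspondence isomorphisms as demanded by Definition \ref{def:SCSG}, but replacing the role played there by the endomorphism property of the second semigroup with the invertibility of the automorphisms $\alpha_s$. Concretely, I would define $v_{s,t}:\cM\otimes_{\alpha_s}\cM\otimes_{\theta_t}\cM\to\cM\otimes_{\theta_t}\cM\otimes_{\alpha_s}\cM$ on elementary tensors by
\[
v_{s,t}(a\otimes_{\alpha_s}b\otimes_{\theta_t}c) = a\,\alpha_s^{-1}(b)\otimes_{\theta_t}I\otimes_{\alpha_s}c .
\]
The appearance of $\alpha_s^{-1}$ is forced: the inner product on the target carries a twist by $\alpha_s$ (one computes $\langle a\otimes_{\theta_t}I\otimes_{\alpha_s}c, a'\otimes_{\theta_t}I\otimes_{\alpha_s}c'\rangle = c^*\alpha_s(\theta_t(a^*a'))c'$), and precomposing the first leg with $\alpha_s^{-1}$ is exactly what is needed to cancel it.

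The first task is to check that $v_{s,t}$ is a well-defined unitary bimodule map sending the vacuum to the vacuum. Since the formula is $\mathbb{C}$-trilinear on the algebraic tensor product, it suffices to verify that it preserves inner products; this is the heart of the matter and is where the commutation hypothesis enters. Expanding both sides, the identity to be checked is
\[
\theta_t\big(b^*\alpha_s(a^*a')b'\big) = \alpha_s\Big(\theta_t\big(\alpha_s^{-1}(b^*)\,a^*a'\,\alpha_s^{-1}(b')\big)\Big),
\]
which follows immediately from $\alpha_s\theta_t = \theta_t\alpha_s$ (and hence $\theta_t\alpha_s^{-1} = \alpha_s^{-1}\theta_t$) together with the fact that $\alpha_s$ is a normal $*$-homomorphism. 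Surjectivity is clear, since the target is spanned by elements $a\otimes_{\theta_t}I\otimes_{\alpha_s}c = v_{s,t}(I\otimes_{\alpha_s}\alpha_s(a)\otimes_{\theta_t}c)$, and the bimodule property together with $v_{s,t}(I\otimes I\otimes I)=I\otimes I\otimes I$ (using $\alpha_s^{-1}(I)=I$) is read off directly from the formula. By the equivalence recorded in the proof of Proposition \ref{prop:SCBS} (see also Definition \ref{def:SC}), this already shows that $\alpha_s$ and $\theta_t$ commute strongly for each fixed $(s,t)$.

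It remains to verify that the two families of diagrams in Definition \ref{def:SCSG} commute, which I would do by chasing a general element $a\otimes b\otimes c$ around each square. For the first diagram the only semigroup identity needed is $\alpha_{s+s'}^{-1}=\alpha_s^{-1}\alpha_{s'}^{-1}$: going right-then-down produces $a\,\alpha_{s+s'}^{-1}(b)\otimes_{\theta_t}I\otimes_{\alpha_s}I\otimes_{\alpha_{s'}}c$, while going down-then-right applies $I\otimes v_{s',t}$ and then $v_{s,t}\otimes I$, yielding $a\,\alpha_s^{-1}(\alpha_{s'}^{-1}(b))\otimes_{\theta_t}I\otimes_{\alpha_s}I\otimes_{\alpha_{s'}}c$; these agree. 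For the second diagram the computation is even simpler, since the $\alpha_s$-leg is not split, and both paths give $a\,\alpha_s^{-1}(b)\otimes_{\theta_t}I\otimes_{\theta_{t'}}I\otimes_{\alpha_s}c$, using $\alpha_s^{-1}(I)=I$ to dispose of the inserted units.

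The main obstacle is the discovery and verification of the correct formula for $v_{s,t}$: a naive transposition $a\otimes b\otimes c\mapsto b\otimes I\otimes c$, or one twisting by $\alpha_s$ rather than $\alpha_s^{-1}$, fails to be isometric because of the $\alpha_s$-twist in the target inner product, and only the precise placement of $\alpha_s^{-1}$, made possible by invertibility of the automorphism, repairs this. Once the formula is in hand, every remaining step — unitarity, the bimodule property, vacuum preservation, and the two diagram chases — reduces to the single commutation relation $\alpha_s\theta_t=\theta_t\alpha_s$ and the multiplicativity of $\alpha_s$, and is routine.
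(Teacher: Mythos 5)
Your proof is correct and follows exactly the route the paper intends: the paper omits this proof, stating only that it is ``similar to the proof of Proposition \ref{prop:SCendo}'', and your explicit family $v_{s,t}(a\otimes_{\alpha_s}b\otimes_{\theta_t}c)=a\,\alpha_s^{-1}(b)\otimes_{\theta_t}I\otimes_{\alpha_s}c$ — with the $\alpha_s^{-1}$ twist replacing the endomorphism-collapse used there (necessary since $\theta_t$ is not multiplicative) — is precisely the required adaptation. The verifications of unitarity via $\alpha_s\theta_t=\theta_t\alpha_s$, the bimodule and vacuum properties, and the two diagram chases of Definition \ref{def:SCSG} all check out.
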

\begin{proof}
The proof is similar to the proof of Proposition \ref{prop:SCendo}, and is omitted.
\end{proof}

\begin{proposition}
Let $T^1 = \{T^1_t\}_{t\geq 0}$ and $T^2 = \{T^2_t\}_{t\geq 0}$ be two commuting semigroups of contractions in $B(H)$, which are continuous in the strong operator topology. For $i=1,2$, let $\phi^i$ be the CP-semigroup defined by
\bes
\phi^i_t(a) = T^i_t a {T^i_t}^* \,\, , \,\, a \in B(H).
\ees
Then $\phi^1$ and $\phi^2$ strongly commute.
\end{proposition}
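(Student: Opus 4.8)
The plan is to construct the family $\{v_{s,t}\}_{(s,t)\in\Rpt}$ demanded by Definition \ref{def:SCSG} \emph{explicitly}, exploiting that each $\phi^i_t$ is conjugation by a single contraction, so that every correspondence appearing in that definition is isomorphic to the identity correspondence over $\cM=B(H)$. Writing $T^{i*}_t:=(T^i_t)^*$, the $\cM$-valued inner product on the GNS correspondence $\cM\otimes_{\phi^i_t}\cM$ satisfies $\langle a\otimes b,a'\otimes b'\rangle = b^* T^i_t a^* a' T^{i*}_t b' = (a\,T^{i*}_t\,b)^*(a'\,T^{i*}_t\,b')$, so the bimodule map $a\otimes b\mapsto a\,T^{i*}_t\,b$ is inner-product preserving from $\cM\otimes_{\phi^i_t}\cM$ into the identity correspondence $\cM$ (with $\langle x,y\rangle=x^*y$ and left/right multiplication for the actions). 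Since $B(H)$ is a factor, the weak-$*$ closed span of $\{a\,T^{i*}_t\,b\}$ is all of $\cM$ whenever $T^i_t\neq 0$, so this is a $W^*$-correspondence isomorphism $\cM\otimes_{\phi^i_t}\cM\cong\cM$ carrying $1\otimes 1$ to $T^{i*}_t$. More generally, I would identify every iterated balanced tensor product $\cM\otimes_{\psi_1}\cdots\otimes_{\psi_n}\cM$ occurring in Definition \ref{def:SCSG} (each $\psi_k$ being some $\phi^i_{t_k}$) with $\cM$ via $a_0\otimes\cdots\otimes a_n\mapsto a_0 X_1 a_1\cdots X_n a_n$, $X_k=T^{i*}_{t_k}$. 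The key algebraic fact, used repeatedly, is that because $T^1$ and $T^2$ are commuting semigroups, the operators $\{T^{1*}_s\}$ and $\{T^{2*}_t\}$ pairwise commute (within each family by the semigroup law, across the two families by the commutation hypothesis).

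Next I would define $v_{s,t}$ as the composite of the identification $\cM\otimes_{\phi^1_s}\cM\otimes_{\phi^2_t}\cM\cong\cM$, namely $a\otimes b\otimes c\mapsto a\,T^{1*}_s\,b\,T^{2*}_t\,c$, with the inverse of the identification $\cM\otimes_{\phi^2_t}\cM\otimes_{\phi^1_s}\cM\cong\cM$, namely $a\otimes b\otimes c\mapsto a\,T^{2*}_t\,b\,T^{1*}_s\,c$. This is a unitary bimodule map, and it sends $I\otimes I\otimes I$ to $I\otimes I\otimes I$: the source generator maps to $T^{1*}_s T^{2*}_t$ and the target generator to $T^{2*}_t T^{1*}_s$, and these agree by commutativity. (One can also read off pairwise strong commutativity directly from Theorem \ref{thm:Solel}: the Kraus families are singletons, hence $\ell^2$-independent, and the required $1\times 1$ unitary is forced to be $1$.)

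Finally I would verify the two commuting diagrams of Definition \ref{def:SCSG}. The point is that, once all four corners of each square are identified with $\cM$, every arrow becomes the identity map of $\cM$. The vertical inclusion isometries, e.g. $a\otimes b\otimes c\mapsto a\otimes I\otimes b\otimes c$, preserve the multiplied-out element because the product of the inserted factors satisfies $T^{1*}_s T^{1*}_{s'}=T^{1*}_{s+s'}$; each $v$ is by construction the identity in these coordinates; and the tensored maps $v_{s,t}\otimes I$ and $I\otimes v_{s',t}$ likewise reduce to $\mathrm{id}_\cM$, since tensoring on either side with $I_\cM$ corresponds to left/right multiplication by the adjacent factors, which $v$ leaves untouched. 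Hence both squares commute trivially, and the second diagram is handled identically with the roles of $\phi^1$ and $\phi^2$ (and of $s,s'$ versus $t,t'$) interchanged.

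The only real obstacle is the bookkeeping in this last step: one must check carefully that $v_{s,t}\otimes I$ and $I\otimes v_{s',t}$ genuinely become the identity on $\cM$ after transporting the four-fold products such as $\cM\otimes_{\phi^1_s}\cM\otimes_{\phi^1_{s'}}\cM\otimes_{\phi^2_t}\cM$ and their reorderings to $\cM$, which is precisely where the mutual commutativity of all of $T^{1*}_s,T^{1*}_{s'},T^{2*}_t$ is used. A secondary technical point is to confirm that the inner-product-preserving maps above extend to isomorphisms of the self-dual (weak-$*$) completions defining the $W^*$-correspondence tensor products, and to dispose separately of the degenerate case $T^i_t=0$, where the relevant correspondence is $\{0\}$ and there is nothing to check.
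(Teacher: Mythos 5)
Your proposal is correct, but it takes a different route from the paper, whose entire proof is the single sentence ``This follows easily from Theorem \ref{thm:Solel}.'' That is, the paper appeals to Solel's matrix criterion: since each $\phi^i_t$ has a singleton Kraus family and the two families commute operator-by-operator, the required $1\times 1$ unitary (which can be taken to be $1$) exists, so $\phi^1_s$ and $\phi^2_t$ commute strongly pairwise -- and the coherence with the semigroup structure demanded by Definition \ref{def:SCSG} is left implicit in the words ``follows easily.'' Your argument instead trivializes everything explicitly: you identify each GNS correspondence $\cM\otimes_{\phi^i_t}\cM$ with the identity correspondence $\cM$ via $a\otimes b\mapsto a T^{i*}_t b$ (using that a nonzero two-sided ideal in $B(H)$ is weak-$*$ dense), define $v_{s,t}$ as the composite of these identifications, and observe that all arrows in the compatibility diagrams become $\mathrm{id}_\cM$, the semigroup law $T^{1*}_sT^{1*}_{s'}=T^{1*}_{s+s'}$ and the mutual commutativity of the two families entering exactly where they should. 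This is more work, but it buys something real: Theorem \ref{thm:Solel} as stated only certifies \emph{pairwise} strong commutation of individual CP maps, whereas Definition \ref{def:SCSG} requires a family $\{v_{s,t}\}$ compatible with both semigroup structures, and your construction is precisely what verifies that stronger condition (it is, in effect, the content hiding behind the paper's citation, made visible via the equivalence underlying Proposition \ref{prop:SCBS}). Your handling of the degenerate case $T^i_t=0$ (where the relevant correspondences vanish and the diagrams commute vacuously) is also a point the one-line proof silently absorbs; note only that the $1\times1$ unitary is not \emph{forced} to be $1$ in that case -- any unimodular scalar works -- but $u=1$ always suffices.
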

\begin{proof}
This follows easily from Theorem \ref{thm:Solel}.
\end{proof}

\section{Representing strongly commuting CP-semigroups via product system representations}\label{sec:repSC}

Let us now recall briefly the constructions of \cite{ShalitCP0Dil} which are the central tool in our approach to dilations. This approach is due to Muhly and Solel \cite{MS02}. First we present the underlying strategy.

\subsection{The strategy}\label{subsec:strategy}
Let $\Theta$ be a CP-semigroup over a (cancellative, abelian, unital) semigroup $\cS$,
acting on a von Neumann algebra $\cM$ of operators in $B(H)$. The
dilation is carried out in two main steps. In the first step, a
product system of $\cM'$-correspondences $X$ over $\cS$
is constructed, together with a representation $(\sigma,T)$
of $X$ on $H$ with $\sigma$ being the identity representation, such that for all $a \in \cM, s \in \cS$,
\be\label{eq:rep_rep}
\Theta_s (a) = \widetilde{T_s} \left(I_{X(s)}
\otimes a \right) \widetilde{T_s}^* ,
\ee
where $T_s$ is the
restriction of $T$ to $X(s)$.
\begin{lemma}\label{lem:semigroup}
Let $W$ be completely contractive covariant representation of $X$ on a Hilbert space $G$, such that $W_0$ is unital. Then the family of maps
\bes
\Theta_s : a \mapsto \widetilde{W}_s (I_{X(s)} \otimes a) \widetilde{W}_s^* \,\, , \,\, a \in W_0 (N)',
\ees
is a semigroup of CP maps (indexed by $\cS$) on $W_0 (N)'$. Moreover, if $W$ is an isometric representation, then $\Theta_s$ is a $*$-endomorphism for all $s\in\cS$.
\end{lemma}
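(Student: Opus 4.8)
The plan is to verify in turn that each $\Theta_s$ is a well-defined normal completely positive contraction of $W_0(N)'$ into itself, that $\Theta_0 = \mathrm{id}$, that $\{\Theta_s\}$ obeys the semigroup law, and finally that each $\Theta_s$ is multiplicative once $W$ is isometric. Everything rests on two covariance identities attached to a c.c. representation. Writing $\varphi_s : N \to \cL(X(s))$ for the left action on the fibre $X(s)$, the relation $T(b\cdot x) = W_0(b)T(x)$ gives $\widetilde{W}_s(\varphi_s(b)\otimes I_G) = W_0(b)\widetilde{W}_s$ for all $b\in N$ (and, by taking adjoints, $(\varphi_s(b)\otimes I_G)\widetilde{W}_s^* = \widetilde{W}_s^* W_0(b)$); while condition (2) in the definition of a representation, $T(xy)=T(x)T(y)$, yields the multiplicativity identity $\widetilde{W}_{s+t}(U_{s,t}\otimes I_G) = \widetilde{W}_s(I_{X(s)}\otimes \widetilde{W}_t)$ after the canonical associativity identification $(X(s)\otimes X(t))\otimes_\sigma G \cong X(s)\otimes_\sigma(X(t)\otimes_\sigma G)$. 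I would state both of these, together with the product-system associativity (\ref{eq:assoc_prod}), at the outset.

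First I would record that $I_{X(s)}\otimes a$, given by $x\otimes h \mapsto x\otimes ah$, is a well-defined bounded operator on $X(s)\otimes_\sigma G$ exactly when $a$ commutes with $W_0(N)$: the balancing relation $xb\otimes h = x\otimes W_0(b)h$ is respected if and only if $a\in W_0(N)'$, which is why the domain of $\Theta_s$ must be $W_0(N)'$. Using the first covariance identity and its adjoint, and the trivial fact that $\varphi_s(b)\otimes I_G$ and $I_{X(s)}\otimes a$ commute (they act on different legs), one computes $W_0(b)\Theta_s(a) = \widetilde{W}_s(\varphi_s(b)\otimes a)\widetilde{W}_s^* = \Theta_s(a)W_0(b)$, so $\Theta_s$ does map $W_0(N)'$ into itself. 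Complete positivity and normality are then immediate, since $a\mapsto I_{X(s)}\otimes a$ is a normal $*$-homomorphism and conjugation by the contraction $\widetilde{W}_s$ preserves both; contractivity follows from $\Theta_s(1)=\widetilde{W}_s\widetilde{W}_s^*\leq I_G$. For $\Theta_0$, the hypothesis that $W_0$ is unital and nondegenerate makes $\widetilde{W}_0 : X(0)\otimes_\sigma G = N\otimes_{W_0}G \to G$ a unitary with $\widetilde{W}_0^* g = 1\otimes g$, whence a one-line computation gives $\Theta_0(a)=a$. (Any continuity requirement in the phrase ``semigroup of CP maps'' would be inherited from continuity of $s\mapsto\widetilde{W}_s$ and is untouched by this algebraic computation.)

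The crux is the semigroup law, and here I would substitute $\Theta_t$ into $\Theta_s$ and push the inner conjugation through the outer leg via $I_{X(s)}\otimes\Theta_t(a) = (I_{X(s)}\otimes\widetilde{W}_t)(I_{X(s)\otimes X(t)}\otimes a)(I_{X(s)}\otimes\widetilde{W}_t^*)$. Feeding this into $\Theta_s(\Theta_t(a))$ and applying the multiplicativity identity $\widetilde{W}_s(I_{X(s)}\otimes\widetilde{W}_t)=\widetilde{W}_{s+t}(U_{s,t}\otimes I_G)$ on the left and its adjoint on the right converts the expression into $\widetilde{W}_{s+t}(U_{s,t}\otimes I_G)(I_{X(s)\otimes X(t)}\otimes a)(U_{s,t}^*\otimes I_G)\widetilde{W}_{s+t}^*$. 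Since $U_{s,t}$ is a correspondence isomorphism acting only on the fibre factor, it intertwines $I_{X(s)\otimes X(t)}\otimes a$ with $I_{X(s+t)}\otimes a$, so the middle collapses and we are left with $\widetilde{W}_{s+t}(I_{X(s+t)}\otimes a)\widetilde{W}_{s+t}^* = \Theta_{s+t}(a)$. The bookkeeping of the three nested identifications (associativity of $\otimes$, compatibility of $U_{s,t}$ with $\otimes_\sigma G$, and the balancing that lets $a$ slide across $U_{s,t}\otimes I_G$) is where I expect the only real friction; I would keep it clean by fixing the identifications in advance and then letting the computation run.

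Finally, when $W$ is isometric we have $\widetilde{W}_s^*\widetilde{W}_s = I_{X(s)\otimes_\sigma G}$, so in $\Theta_s(a)\Theta_s(b) = \widetilde{W}_s(I_{X(s)}\otimes a)\widetilde{W}_s^*\widetilde{W}_s(I_{X(s)}\otimes b)\widetilde{W}_s^*$ the inner $\widetilde{W}_s^*\widetilde{W}_s$ cancels and the two amplifications fuse to $I_{X(s)}\otimes ab$, giving $\Theta_s(ab)=\Theta_s(a)\Theta_s(b)$; together with the evident $\Theta_s(a^*)=\Theta_s(a)^*$ this makes each $\Theta_s$ a normal $*$-endomorphism of $W_0(N)'$ (not necessarily unital, which would instead require $W$ fully coisometric). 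The main obstacle throughout is thus organizational — keeping the canonical identifications of the iterated balanced tensor products straight — rather than any analytic estimate.
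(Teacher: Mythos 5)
Your proposal is correct. The paper itself gives no argument for this lemma, only the citation \cite[Lemma 6.1]{ShalitCP0Dil}, and your proof is essentially the standard one given there: the covariance identity $\widetilde{W}_s(\varphi_s(b)\otimes I_G)=W_0(b)\widetilde{W}_s$ to see that $\Theta_s$ maps $W_0(N)'$ into itself, the multiplicativity identity $\widetilde{W}_{s+t}(U_{s,t}\otimes I_G)=\widetilde{W}_s(I_{X(s)}\otimes\widetilde{W}_t)$ together with the fact that $U_{s,t}\otimes I_G$ intertwines the amplifications of $a$ to get the semigroup law, and cancellation of $\widetilde{W}_s^*\widetilde{W}_s$ for multiplicativity in the isometric case.
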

\begin{proof}
\cite[Lemma 6.1]{ShalitCP0Dil}
\end{proof}

Having this Lemma in mind,
one sees that a natural way to continue the process of dilation
will be to somehow ``dilate" $(\sigma, T)$ to an isometric representation. In previous works, such as \cite{MS02}, \cite{S06a} and \cite{ShalitCP0Dil}, the construction of a minimal isometric dilation $(\rho,V)$ (representing $X$ on a Hilbert space $K \supseteq H$) of the
representation $(\sigma, T)$ appearing in equation
(\ref{eq:rep_rep}) is the second step of the dilation
process. Then it is shown that if $\cR = \rho(\cM')'$, and
$\alpha$ is defined by
\bes
\alpha_s (a) := \widetilde{V_s}
\left(I_{X(s)} \otimes a \right) \widetilde{V_s}^* \,\,,\,\, a \in \cR,
\ees
then the quadruple $(K,u,\cR,\alpha)$ is an E-dilation for
$(\Theta, \cM)$. (In this context it is interesting to note Skeide's paper \cite{Skeide08}, where this strategy was reversed to prove the existence of isometric dilation for a representation of a product system over $\mb{R}_+$, by dilating the associated one-parameter CP-semigroup).

In the remainder of this paper we shall try to follow the steps mentioned above, but the difficulties that stem for the fact that our semigroup is not-necessarily unit preserving will force us to follow a close but different route, which will unravel as we proceed.

\subsection{Construction of the representation}

Let $\cM$ be a von
Neumann algebra acting on a Hilbert space $H$, let $\{R_t\}_{t\geq0}$ and
$\{S_t\}_{t\geq0}$ be two strongly commuting CP-semigroups on
$\cM$, and $P_{(s,t)} := R_s S_t$. This notation will be fixed for the rest of the paper.
The following is a summary of Section 4.3 in \cite{ShalitCP0Dil}.

In \cite{MS02}, Muhly and Solel associate with every CP-semigroup a product system (of
$W^*$-correspondences over $\cM '$) and a product system representation which represents it as in (\ref{eq:rep_rep}). Let $\{E(t)\}_{t\geq0}$,
$\{F(t)\}_{t\geq0}$ denote the product systems associated with
$\{R_t\}_{t\geq0}$ and $\{S_t\}_{t\geq0}$, respectively, and let
$T^E$, $T^F$ be the corresponding representations. For $s,t \geq 0$, we
denote by $\theta_{s,t}^E$ and $\theta_{s,t}^F$ the unitaries
$$\theta_{s,t}^E : E(s)\otimes_{\cM'}E(t) \rightarrow E(s+t) ,$$
and
$$\theta_{s,t}^F : F(s)\otimes_{\cM'}F(t) \rightarrow F(s+t) .$$

For all $s,t \geq 0$ one can construct an isomorphism of
$W^*$-correspondences
\be
\varphi_{s,t}:E(s) \otimes_{\cM'} F(t)
\rightarrow F(t) \otimes_{\cM'} E(s),
\ee
which are compatible with the semigroup structure as in Definition \ref{def:SCSG}.
The isomorphisms
$\{\varphi_{s,t}\}_{s,t\geq0}$, together with the identity
representations $T^E$, $T^F$, satisfy
the ``commutation" relation:
\be\label{eq:commutation_relation}
\tT_s^E (I_{E(s)} \otimes \tT_t^F) = \tT_t^F (I_{F(t)} \otimes
\tT_s^E) \circ (\varphi_{s,t} \otimes I_H) \quad , t,s \geq0 .
\ee
We define the product system $X$ by
$$X(s,t) := E(s) \otimes F(t) ,$$
and
$$\theta_{(s,t),(s',t')}: X(s,t) \otimes X(s',t') \rightarrow X(s+s',t+t') ,$$
by
$$\theta_{(s,t),(s',t')} =  (\theta_{s,s'}^E \otimes \theta_{t,t'}^F)\circ (I \otimes \varphi_{s',t}^{-1} \otimes I) .$$
For $s,t
\geq 0$, $\xi \in E(s)$ and $\eta \in F(t)$, we define a representation $T$ of $X$ by
$$T_{(s,t)}(\xi \otimes \eta) := T_s^E (\xi) T_t^F (\eta) .$$
A long and technical proof shows that all of our constructions and assertions above our make sense and are correct, and we have the following theorem.
\begin{theorem}\label{thm:rep_SC}(\cite[Theorem 4.12]{ShalitCP0Dil}. See also \cite{OrrCorr})
There exists a two-parameter product system of
$\cM'$-correspondences $X$, and a completely contractive,
covariant representation $T$ of $X$ into
$B(H)$, such that for all $(s,t) \in \Rpt$ and all $a \in \cM$,
the following identity holds: \be\label{eq:rep_SC}
\tT_{(s,t)}(I_{X(s,t)} \otimes a)\tT_{(s,t)}^* = P_{(s,t)}(a) .
\ee
Furthermore, if $P$ is unital, then $T$ is fully coisometric.
\end{theorem}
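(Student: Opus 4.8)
The plan is to assemble the two-parameter object from the two one-parameter Muhly--Solel constructions and to let strong commutativity supply the glue. First I would invoke the Muhly--Solel machine of \cite{MS02} applied separately to $\{R_t\}$ and $\{S_t\}$: this produces the one-parameter product systems $\{E(s)\}$ and $\{F(t)\}$ of $\cM'$-correspondences, together with identity representations $T^E$ and $T^F$ on $H$ realizing $R_s(a) = \tT_s^E(I_{E(s)} \otimes a)(\tT_s^E)^*$ and $S_t(a) = \tT_t^F(I_{F(t)} \otimes a)(\tT_t^F)^*$. The second input is strong commutativity: Definition \ref{def:SCSG} hands me a coherent family of flips $\varphi_{s,t}: E(s) \otimes_{\cM'} F(t) \to F(t) \otimes_{\cM'} E(s)$, and I would first record that these flips, being compatible with the two semigroup structures, force the commutation relation (\ref{eq:commutation_relation}) at the level of the representations.

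With these ingredients in hand I would set $X(s,t) := E(s) \otimes F(t)$ and define the multiplication $\theta_{(s,t),(s',t')} = (\theta^E_{s,s'} \otimes \theta^F_{t,t'}) \circ (I \otimes \varphi_{s',t}^{-1} \otimes I)$ and the representation $T_{(s,t)}(\xi \otimes \eta) = T_s^E(\xi) T_t^F(\eta)$ exactly as displayed above. The first verification is that $X$ is genuinely a product system, i.e. that $\theta$ is associative in the sense of (\ref{eq:assoc_prod}). This unwinds into the associativity of $\theta^E$ and $\theta^F$ (already known) combined with the two coherence diagrams of Definition \ref{def:SCSG}, which are precisely what is needed to thread a single flip past a subdivided tensor factor consistently; this bookkeeping is most of the ``long and technical'' labour. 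The second verification is that $T$ is a c.c. representation: complete contractivity and the bimodule identities on each fibre are inherited from $T^E$ and $T^F$, while the multiplicativity $T(xy) = T(x)T(y)$ reduces, after applying $\theta$ and using that $T^E$ and $T^F$ are themselves multiplicative, to exactly the commutation relation (\ref{eq:commutation_relation}). I expect this multiplicativity step, together with the associativity check, to be the principal obstacle, since both hinge on moving $\varphi$ correctly through the tensor factors.

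Granting that $T$ is a representation of $X$, the dilation identity (\ref{eq:rep_SC}) is the easy part and follows from a factorization of $\tT_{(s,t)}$. Identifying $X(s,t) \otimes_\sigma H$ with $E(s) \otimes F(t) \otimes H$ one has $\tT_{(s,t)} = \tT_s^E \circ (I_{E(s)} \otimes \tT_t^F)$, whence
\begin{align*}
\tT_{(s,t)}(I_{X(s,t)} \otimes a)\tT_{(s,t)}^*
&= \tT_s^E \big(I_{E(s)} \otimes \tT_t^F(I_{F(t)} \otimes a)(\tT_t^F)^*\big)(\tT_s^E)^* \\
&= \tT_s^E(I_{E(s)} \otimes S_t(a))(\tT_s^E)^* = R_s(S_t(a)) = P_{(s,t)}(a),
\end{align*}
using the one-parameter identities in turn. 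Finally, for the ``furthermore'' clause, unitality of $P$ gives $R_s(I) = P_{(s,0)}(I) = I$ and $S_t(I) = P_{(0,t)}(I) = I$, so both one-parameter semigroups are unital; since $R_s(I) = \tT_s^E(\tT_s^E)^*$ and similarly for $F$, each $\tT_s^E$ and each $\tT_t^F$ is a coisometry, and then $\tT_{(s,t)} = \tT_s^E(I_{E(s)} \otimes \tT_t^F)$ is a composition of coisometries, hence a coisometry. Thus $T$ is fully coisometric, completing the plan.
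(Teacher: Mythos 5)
Your proposal is correct and follows essentially the same route as the paper, which itself only summarizes this construction in Section \ref{sec:repSC} and delegates the ``long and technical'' verifications to Theorem 4.12 of \cite{ShalitCP0Dil} and its corrigendum \cite{OrrCorr}: build the two Muhly--Solel product systems $E$ and $F$, glue them with the flips coming from strong commutativity, check associativity via the coherence diagrams of Definition \ref{def:SCSG} and multiplicativity via the commutation relation (\ref{eq:commutation_relation}), and then read off (\ref{eq:rep_SC}) and the fully coisometric case from the factorization $\tT_{(s,t)} = \tT_s^E(I_{E(s)} \otimes \tT_t^F)$. The one point you gloss over --- that Definition \ref{def:SCSG} supplies flips between the $\cM$-correspondences $\cM \otimes_{R_s} \cM \otimes_{S_t} \cM$, and producing from them the maps $\varphi_{s,t}$ between the $\cM'$-correspondences $E(s)\otimes F(t)$ requires a duality argument --- is glossed over at exactly the same level in the paper's own summary, so this does not constitute a gap relative to the paper's treatment.
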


\begin{remark}
\emph{
It is important to note that the above construction depends on the condition that $R$ and $S$ commute strongly. This condition is used for the existence of the maps $\varphi_{s,t}$ (satisfying Definition \ref{def:SCSG}) and in the proof that $X$ is a product system.}
\end{remark}

\section{Isometric dilation of a product system representation over $\diadp$}\label{sec:iso_dil}

If everything was going according to the plans sketched at the beginning of Subsection \ref{subsec:strategy}, then the title of this section should have been ``Isometric dilation of a product system representation over $\Rpt$".
However, we have finally reached the point where this work starts to differ significantly from \cite{ShalitCP0Dil}. In \cite{ShalitCP0Dil}, since the CP-semigroup was unital, the product system representation which had to be dilated was \emph{fully coisometric}, and this property is crucial for the proof given there for the existence of an isometric dilation. Fully-coisometric product system representations are analogous in a way to semigroups of \emph{coisometries} on a Hilbert space. In the context of classical dilation theory of contraction semigroups on a Hilbert space \cite{SzNF70}, the problem of finding an isometric dilation to a semigroup of coisometries is relatively easy (see also \cite{Douglas}). Here we will only be able to construct an isometric dilation for a product system representation over the set $\diadp$ of positive dyadic pairs, where
$$\diad := \left\{\left(\frac{k}{2^{n}},\frac{m}{2^{n}}\right) : (k,m) \in \mathbb{Z}^2, n \in \mathbb{N} \right\} $$
is the set of dyadic fractions.
This will be sufficient to lead us to our present goal of dilating a CP-semigroup over $\Rpt$, due to some remarkable continuity properties of CP-semigroups, discussed in the next section.

Let $\cM$ be a von Neumann algebra, let $X$ be a product system of $\cM$-correspondences over $\diadp$, and let $H$ be a Hilbert space. Assume that $\sigma$ is a normal representation of $\cM$ on $H$. We denote by $\cH_0$ the space of all finitely supported functions $f$ on $\diadp$ such that $f(n) \in X(n) \otimes_\sigma H$, for all $n \in \diadp$. For any $n = (n_1, n_2) \in \diad$, we denote by $n_+$ the element in $\diadp$ having $\max\{n_i,0 \}$ in its $i$-th entry, and we denote $n_- = n_+ - n$.
\begin{defin}\label{def:positive}
Let $\Phi$ be a function on $\diad$ such that $\Phi(n) \in B(X(n_+) \otimes_\sigma H, X(n_-) \otimes_\sigma H)$, $n \in \diad$. We say that $\Phi$ is \emph{positive definite} if $\Phi (0) = I_{M \otimes_\sigma H}$ and if
\begin{enumerate}
\item \label{it:positive} For all $h \in \cH_0$ we have
\bes
\sum_{m,n \in \diadp} \left\langle \left[ I_{X(m-(m-n)_+)} \otimes \Phi(m-n)\right ] h(m), h(n) \right\rangle \geq 0 .
\ees
\item \label{it:self-adjoint}$\Phi(n) = \Phi(-n)^*$, for all $n \in \diadp$.
\item \label{it:covariance} For all $n,\in \diadp$, $a \in \cM$,
\bes
 \Phi(n)\left(a \otimes I_H \right) = \left(a \otimes I_H \right) \Phi(n) .
\ees
\end{enumerate}
\end{defin}
In item (\ref{it:covariance}) above, $(a \otimes I_H) \xi$ should be interpreted as $\sigma (a) \xi$ if $\xi \in H$, $\varphi_{X(m)}(a) x \otimes h$ if $\xi = x \otimes h$, $h \in H, x \in X(n)$, for some $n \neq 0$, and as $ab \otimes h$ if $\xi = b \otimes h$ for $b \in \cM, h \in H$. This will remain our convention throughout.

We note that the proof of Theorem 3.5 in \cite{S06b} implies the following fact: \emph{if $\Phi$ is a positive definite function on $X$ as above, then there exists a covariant isometric representation $V$ of $X$ on some Hilbert space $K \supseteq H$, such that $H$ is a reducing subspace for $V_0$ with $V_0(\cdot) \big|_H = \sigma(\cdot)$, and such that}
\be\label{eq:dil}
P_{X(n_-) \otimes H} V(n) \big|_{X(n_+) \otimes H} = \Phi(n) \,\, , \,\, n \in \diad,
\ee
\emph{where $V(n) := \widetilde{V}_{n_-}^*\widetilde{V}_{n_+}$}. This fact is the basis of the proof of the following theorem, so we point out that the definition of $V$ in the above mentioned proof has to be modified in an obvious manner and that straightforward calculations (some almost identical and some different from what appeared in the proof) show that $V$ has all the required properties. The main difference is that one has to show that $V$ has the ``semigroup" property.

\begin{theorem}\label{thm:IsoDilDiad}
Let $\cM$ be a $C^*$-algebra, and let $X=\{X(s,t)\}_{(s,t) \in \diadp}$ be a product system of $\cM$-correspondences. Let $T$ be a representation of $X$ on a Hilbert space $H$, with $\sigma = T(0,0)$.
Assume that for all $(s,t) \in \diadp$, the Hilbert space $X(s,t) \otimes_{\sigma} H$ is separable. 
Then there exists an isometric representation $V$ of $X$ on Hilbert space $K$ containing $H$ such that:
\begin{enumerate}
    \item $P_H$ commutes with $V_{(0,0)} (\cM)$, and $V_{(0,0)}(a) P_H = T_{(0,0)}(a) P_H$, for all $a \in \cM$.
    \item\label{it:dilation} $P_H V_{(s,t)}(x)\big|_H = T_{(s,t)}(x)$ for all $(s,t) \in \diadp$, $x \in X(s,t)$.
    \item\label{it:min} $K = \bigvee \{V(x)h : x \in X, h \in H \}$.
    \item\label{it:V*} $P_H V_{(s,t)}(x)\big|_{K \ominus H} = 0$ for all $(s,t) \in \diadp$, $x \in X(s,t)$.
\end{enumerate}
If $\cM$ is a $W^*$-algebra, $X$ a product system of $W^*$-correspondences and $T_0$ is normal, then
$V_0$ is also normal, that is, $V$ is a representation of $W^*$-product systems.
\end{theorem}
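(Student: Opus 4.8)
The plan is to build the isometric dilation $V$ directly from the positive-definiteness machinery already quoted from \cite{S06b}, by manufacturing the appropriate positive definite function $\Phi$ on $\diad$ out of the representation $T$. The starting point is the identity \eqref{eq:dil}, which says that once we have a positive definite $\Phi$ we automatically obtain an isometric representation $V$ on some $K \supseteq H$ with $V_0\big|_H = \sigma$ and $P_{X(n_-)\otimes H} V(n)\big|_{X(n_+)\otimes H} = \Phi(n)$. So the real content is twofold: first, to exhibit the right $\Phi$; second, to translate the abstract output of \cite{S06b} into the four numbered properties, paying attention to the ``semigroup'' property that the excerpt flags as the main new difficulty.

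First I would define $\Phi$. For $n \in \diadp$ the natural candidate is $\Phi(n) = \widetilde{T}_{n}^*\widetilde{T}_{0}\cdots$ — more precisely, mimicking the formula $V(n) = \widetilde{V}_{n_-}^*\widetilde{V}_{n_+}$, I would set $\Phi(n) := \widetilde{T}_{n_-}^*\widetilde{T}_{n_+}$ as an operator from $X(n_+)\otimes_\sigma H$ to $X(n_-)\otimes_\sigma H$, noting that $\Phi(0) = I$. Property (\ref{it:self-adjoint}) of Definition \ref{def:positive}, namely $\Phi(n) = \Phi(-n)^*$, is then immediate from $(-n)_\pm = n_\mp$, and the covariance property (\ref{it:covariance}) follows from the covariance of the c.c.\ representation $T$ together with the intertwining relations $T(xa) = T(x)\sigma(a)$ and $T(a\cdot x) = \sigma(a)T(x)$. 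The genuinely nontrivial clause is positive-definiteness (\ref{it:positive}): I would verify the displayed sum is nonnegative by regrouping the terms, using the product-system isomorphisms $X(m)\otimes X(k) \cong X(m+k)$ to realize the operators $I_{X(m-(m-n)_+)}\otimes\Phi(m-n)$ on a common space, and then recognizing the quadratic form as $\|\sum_m \widetilde{T}_{\cdot}(\cdots) h(m)\|^2 \geq 0$, exactly as in the one-parameter computations this construction generalizes. The separability hypothesis on each $X(s,t)\otimes_\sigma H$ is what lets me invoke the \cite{S06b} construction legitimately.

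Once $V$ is produced, I would read off the conclusions. Property (1) asserts that $P_H$ commutes with $V_0(\cM)$ and $V_0\big|_H = T_0\big|_H = \sigma$; this is part of what \eqref{eq:dil} guarantees ($H$ is reducing for $V_0$ with $V_0\big|_H = \sigma$). Property (\ref{it:dilation}), the compression formula $P_H V_{(s,t)}(x)\big|_H = T_{(s,t)}(x)$, I would extract from \eqref{eq:dil} specialized to $n = (s,t) \in \diadp$ (so $n_- = 0$, $n_+ = n$), which gives $P_H \widetilde{V}_n\big|_{X(n)\otimes H} = \widetilde{T}_n$, and unwinding the definition $\widetilde{V}_n(x\otimes h) = V_n(x)h$ yields the stated fiberwise identity. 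Property (\ref{it:min}) (minimality) is obtained by passing to the reducing subspace $K_0 = \bigvee\{V(x)h\}$, which inherits all the compression relations, and replacing $K$ by $K_0$; this is a standard cut-down. Property (\ref{it:V*}) then follows from (\ref{it:dilation}) and (\ref{it:min}) together with the isometry of $V$: since vectors $V(x)h$ span $K$ and $P_H V_{(s,t)}(x)h$ lands in $H$ with the right value, orthogonality forces the compression of $V_{(s,t)}(x)$ to $K\ominus H$ to vanish, which I would phrase via the coinvariance of $K\ominus H$.

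The step I expect to be the genuine obstacle is establishing that $V$ has the \emph{semigroup} (representation) property $V(xy) = V(x)V(y)$ — precisely the point the excerpt singles out as the main modification of the \cite{S06b} argument. The function $\Phi$ is defined on the lattice-like set $\diad$, and while positive-definiteness delivers an isometric dilation on each one-parameter slice, one must check that the two-parameter multiplications in the product system $X$ are respected by $V$, i.e.\ that the dilations in the two coordinate directions are compatible. Here I would lean on the associativity relation \eqref{eq:assoc_prod} of the product system and on the commutation relation \eqref{eq:commutation_relation} built into $X$ via the $\varphi_{s,t}$, verifying $V(xy)=V(x)V(y)$ first for $x,y$ in ``aligned'' fibers (where it reduces to the semigroup property of each $\widetilde{V}_n$) and then propagating through the product-system isomorphisms $\theta_{(s,t),(s',t')}$. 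The normality statement for $V_0$ in the $W^*$ case is comparatively routine: normality of $T_0 = \sigma$ transfers to $V_0$ because $V_0$ restricted to the generating vectors $V(x)h$ is built from $\sigma$ and the left actions of $\cM$ on the fibers $X(s,t)$, all of which are normal, so $V_0$ is a normal representation on the reducing subspace $K$.
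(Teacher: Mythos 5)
Your overall frame---manufacture a positive definite function $\Phi$ on $\diad$ and then invoke the construction quoted from \cite{S06b} before the theorem---is the right one, and your reading of how properties (1)--(4) and normality follow once $V$ exists is essentially sound. But the proposal fails at its central step. The function you define, $\Phi(n) := \widetilde{T}_{n_-}^*\widetilde{T}_{n_+}$, need not be positive definite, and your argument for positivity is circular. The regrouping that identifies $\sum_{m,n}\bigl\langle [I\otimes\Phi(m-n)]h(m),h(n)\bigr\rangle$ with $\bigl\|\sum_m \widetilde{T}_{m}(\cdots)h(m)\bigr\|^2$ requires the identity $\widetilde{T}_n^*\widetilde{T}_m = I_{X(m-(m-n)_+)}\otimes \widetilde{T}_{(m-n)_-}^*\widetilde{T}_{(m-n)_+}$, and that identity holds only if $\widetilde{T}_r^*\widetilde{T}_r = I$ for the common part $r = m-(m-n)_+ = n-(n-m)_+$, i.e.\ only if $T$ is already isometric---which is what is being proved. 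Moreover, nothing in your argument uses anything special about $\diadp$, so if it were valid it would show that \emph{every} c.c.\ representation of a product system over $\mathbb{N}^2$ (in particular, every pair of commuting contractions) admits a \emph{regular} isometric dilation; this is false, since regular dilations of a commuting pair $(T_1,T_2)$ exist exactly under Brehmer's condition $I - T_1^*T_1 - T_2^*T_2 + T_2^*T_1^*T_1T_2 \geq 0$, which can fail. The failure persists under the theorem's actual hypotheses: one can exhibit a commuting pair of contraction semigroups $\{T_t\}$, $\{S_t\}$ on $H=\mathbb{C}^2$ whose time-one maps violate Brehmer's condition, and the resulting representation of the trivial product system (all fibers $\mathbb{C}$) over $\diadp$ has your $\Phi$ non-positive-definite on the integer points of $\diad$, even though the isometric dilation asserted by the theorem does exist (Ando/S\l{}oci\'nski/Ptak type results). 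In one parameter the ``regular kernel'' of a single contraction happens to be positive definite (Sz.-Nagy), but that is special to one parameter and is in any case never proved by the naive regrouping.

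This gap is precisely what Ptak's construction, which the paper follows, is designed to bridge: the paper does \emph{not} define $\Phi$ on the mixed-sign part of $\diad$ by any formula in $T$. Instead, for each $n$ it invokes Solel's discrete two-parameter dilation theorem (\cite[Theorem 4.4]{S06a}, an Ando-type result with no positivity hypothesis) to get an isometric dilation $(\rho_n,V_{1,n},V_{2,n})$ of the restriction of $T$ to the lattice $2^{-n}\mathbb{N}^2$, defines $a_n(s,t)$ as compressions of products of these dilation operators, and then extracts a subsequence along which $a_{n_k}(s,t)$ converges in the weak operator topology for all $(s,t)\in\diad$ simultaneously; $\Phi$ is this weak limit. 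Positive definiteness of $\Phi$ is inherited from the genuine isometricity of the finite-level dilations (your telescoping identity is valid for the lifted isometries $\widetilde{V}_{i,n_k}$, not for $\widetilde{T}$), while $\Phi = \widetilde{T}$ on $\diadp$, which is all that the compression properties need. Two smaller corrections: the separability hypothesis is not what ``lets you invoke \cite{S06b}''---its role is to make the unit balls of the relevant operator spaces WOT-metrizable, hence sequentially compact, so that the diagonalization over the countable set $\diad$ can be carried out; and item (\ref{it:V*}) is obtained in the paper from minimality of the cut-down dilation rather than directly from isometricity. Your concern about the semigroup property of $V$ is legitimate, but it is a modification of the proof of \cite[Theorem 3.5]{S06b} and is secondary to the missing positive-definiteness argument.
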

A dilation satisfying item (\ref{it:min}) above is called a \emph{minimal} dilation.

\begin{proof}
For any $n \in \mathbb{N}$, the triple $(\sigma, T(2^{-n},0), T(0,2^{-n}))$ defines a c.c. representation of the sub-product system $X^{(n)} = \{X(m/{2^n},k/{2^n})\}_{m,k}$. We will denote $X^{(n)}(m,k) = X(m/{2^n},k/{2^n})$. By Theorem 4.4 in \cite{S06a}, this representation has a covariant isometric dilation $(\rho_n,V_{1,n},V_{2,n})$ on some Hilbert space which we need not refer to. As $n$ increases we get isometric dilations to the restriction of $T$ to fatter and fatter sub-product systems, but the problem is that we do not know exactly how (and if) they sit one inside the other. Our immediate goal is to define a positive definite function $\Phi$ on $\diad$. The heart of the following idea is taken from Ptak's paper \cite{Ptak}.

First we define, for all $n \in \mathbb{N}, (s,t) \in \diad$ an operator $a_n (s,t)$ in $B(X(s_+,t_+) \otimes H, X(s_-,t_-) \otimes H)$. This is done in the following manner. Fixing $(s,t) \in \diad$, there is some $n_{s,t} \in \mathbb{N}$ such that for all $n \geq n_{s,t}$ there are two integers $m_{s,n}$ and $k_{t,n}$ satisfying $(s,t) = (m_{s,n}\cdot 2^{-n},k_{t,n}\cdot 2^{-n})$, and such that $n_{s,t}$ is the minimal natural number with this property. For $n < n_{s,t}$ we define $a_n(s,t) = 0$. For $n \geq n_{s,t}$ we define
\bes
a_n(s,t) = P_{X(s_-,t_-)\otimes H}\overline{V}_n (m_{s,n},k_{t,n}) \big|_{X(s_+,t_+)\otimes H}
\ees
where $\overline{V}_n (m,k):= \left(I_{X^{(n)}(0,k_-)} \otimes (\widetilde{V}_{1,n}^{m_-})^*\right) (\widetilde{V}_{2,n}^{k_-})^* \widetilde{V}_{1,n}^{m_+}(I_{X^{(n)}(m_+,0)} \otimes \widetilde{V}_{2,n}^{k_+})$ (to be precise, one should multiply the right hand side by $U_{(0,k_-),(m_-,0)}\otimes I_H$ on the left and $U_{(m_+,0),(0,k_+)}^{-1} \otimes I_H$ on the right, where $U_{\cdot,\cdot}$ are the multiplication maps of $X^{(n)}$).

For fixed $(s,t) \in \diadp$, and for large enough $n$, we have
$$a_n(s,t) = T(s,t):= \widetilde{T}^*_{(s_-,t_-)} \widetilde{T}_{(s_+,t_+)} = \widetilde{T}_{(s,t)} .$$
Fixing $(s,t) \in \diad$, we have  for all large enough $n$
\begin{align*}
a_n(-s,-t)^*
&=\left(P_{X(s_+,t_+) \otimes H} \left(I_{0,k_+} \otimes (\widetilde{V}_{1,n}^{m_+})^*\right) (\widetilde{V}_{2,n}^{k_+})^* \widetilde{V}_{1,n}^{m_-}(I_{m_-,0} \otimes \widetilde{V}_{2,n}^{k_-}) \right)^* \big|_{X(s_-,t_-)\otimes H} \\
&= P_{X(s_-,t_-) \otimes H} \left(I_{m_-,0} \otimes (\widetilde{V}_{2,n}^{k_-})^*\right) (\widetilde{V}_{1,n}^{m_-})^* \widetilde{V}_{2,n}^{k_+}(I_{0,k_+} \otimes \widetilde{V}_{1,n}^{m_+})\big|_{X(s_+,t_+)\otimes H} \\
(*)&=a_n(s,t)
\end{align*}
where we used the shorthand notations $I_{p,q} = I_{X^{(n)}(p,q)}$, $m=m_{s,n},k=k_{t,n}$, and the equality in marked by (*) is true up to multiplication by the product system multiplication maps $U_{\cdot,\cdot}$. Also, it follows immediately from the covariance properties of $(\rho_n,V_{1,n},V_{2,n})$ that $a_n(s,t)$ intertwines the various interpretations of $(a \otimes I_H), a \in \cM$.

Now that $a_n(s,t)$ is defined, we construct a positive definite function $\Phi$ on $\diad$. For every $(s,t) \in \diad$, $\{a_n(s,t)\}_n$
is a sequence of operators in $B(X(s_+,t_+) \otimes H, X(s_-,t_-) \otimes H)$ with norm less than or equal $1$.
As the unit ball of $B(X(s_+,t_+) \otimes H, X(s_-,t_-) \otimes H)$ is weak operator compact, there is a subsequence
$\{n_k\}_{k=1}^\infty$ of $\mathbb{N}$ such that $a_{n_k}(s,t)$ converges in the weak operator topology (our separability assumptions imply that the unit ball of $B(X(s_+,t_+) \otimes H, X(s_-,t_-) \otimes H)$ is metrizable, hence sequentially compact). In fact, since $\diad$ is countable,
a standard diagonalization procedure will produce $\{n_k\}_{k=1}^\infty$ of $\mathbb{N}$ such that $a_{n_k}(s,t)$ converges weakly for \emph{all} $(s,t) \in \diad$. We define
$$\Phi(s,t) = \textrm{wot --}\lim_{k \rightarrow \infty} a_{n_k}(s,t).$$

By the properties that $a_n(s,t)$ possesses, it follows that for $(s,t) \in \diadp$,
$$\Phi(s,t) = T(s,t).$$
Also, $\Phi$ satisfies items (\ref{it:self-adjoint}) and (\ref{it:covariance}) of Definition \ref{def:positive}. For example, for (\ref{it:covariance}) it is enough to check
\begin{align*}
\langle \Phi(s,t) (a \otimes I) e_i, f_j \rangle
&= \lim_{k \rightarrow \infty }\langle a_{n_k}(s,t) (a \otimes I) e_i, f_j \rangle \\
&= \lim_{k \rightarrow \infty }\langle a_{n_k}(s,t)  e_i, (a \otimes I)^* f_j \rangle \\
&= \langle (a \otimes I) \Phi(s,t)  e_i, f_j \rangle .
\end{align*}
(\ref{it:self-adjoint}) follows similarly. Let us prove that it also satisfies item (\ref{it:positive}). Let $h \in \cH_0$, and consider the sum
\be\label{eq:sum}
\sum_{m,n \in \diadp} \left\langle \left[I_{X(m-(m-n)_+)} \otimes \Phi(m-n)\right] h(m), h(n) \right\rangle .
\ee
We are going to show that this sum is greater than $-\epsilon$, for any $\epsilon >0$. The sum in (\ref{eq:sum}) contains only a finite number, say $N$, of non-zero summands. We may take $k$ large enough to satisfy
\bes
\left|\left\langle \left[I_{m-(m-n)_+} \otimes \Phi(m-n)\right] h(m), h(n) \right\rangle - \left\langle \left[I_{m-(m-n)_+} \otimes a_{n_k}(m-n)\right] h(m), h(n) \right\rangle\right| < \frac{\epsilon}{N} ,
\ees
for all $m,n \in \diadp$. If needed, we take $k$ even larger, so that
\bes
a_{n_k}(d) = P_{X(d_-)\otimes H}\overline{V}_{n_k} (m_{d_1,n},k_{d_2,n}) \big|_{X(d_+)\otimes H}
\ees
for all $d = (d_1,d_2) \in \diad$ that appears in a non-zero inner product in (\ref{eq:sum}). In other words, we assume that all dyads appearing non-trivially in (\ref{eq:sum}) have the form $(p\cdot2^{-n_k},q\cdot2^{-n_k})$, $p,q \in \mathbb{Z}$. But then
\begin{align*}
& \sum_{m,n \in \diadp} \langle I_{m-(m-n)_+} \otimes a_{n_k}(m-n) h(m), h(n) \rangle \\
&= \sum_{m,n \in \diadp} \langle I_{m-(m-n)_+} \otimes P_{X((m-n)_-)\otimes H} \widetilde{U}_{(m-n)_-}^* \widetilde{U}_{(m-n)_+} \big|_{X(m-n)_+} h(m), h(n) \rangle \\
(*)&= \sum_{m,n \in \diadp} \langle I_{m-(m-n)_+} \otimes \widetilde{U}_{(m-n)_-}^* \widetilde{U}_{(m-n)_+} h(m), h(n) \rangle \\
(**)&= \sum_{m,n \in \diadp} \langle \widetilde{U}_{n}^* \widetilde{U}_{m} h(m), h(n) \rangle \\
&= \sum_{m,n \in \diadp} \langle  \widetilde{U}_{m} h(m),\widetilde{U}_{n} h(n) \rangle \geq 0.
\end{align*}
The equality marked by (*) follows from identifying $X(d) \otimes H$ with a subspace of $X(d) \otimes G$, where $G$ is the dilation Hilbert space associated with $U$, and $U$ is the isometric dilation of the restriction of $T$ to $X^{(n_k)}$. The equality marked by (**) follows from
\begin{align*}
\widetilde{U}_{n}^* \widetilde{U}_{m}
&= (I_{n-(n-m)_+} \otimes \widetilde{U}^*_{(n-m)_+})\widetilde{U}^*_{n-(n-m)_+}\widetilde{U}_{m-(m-n)_+} (I_{m-(m-n)_+} \otimes
\widetilde{U}_{(m-n)_+})\\
&= I_{m-(m-n)_+} \otimes \widetilde{U}^*_{(m-n)_-} \widetilde{U}_{(m-n)_+},
\end{align*}
because $n-(n-m)_+ = m-(m-n)_+$ and $(n-m)_+ = (m-n)_-$.
Thus
\bes
\sum_{m,n \in \diadp} \left\langle \left[I_{X(m-(m-n)_+)} \otimes \Phi(m-n)\right] h(m), h(n) \right\rangle \geq -\epsilon
\ees
for all $\epsilon$, thus $\sum_{m,n \in \diadp} \left\langle \left[I_{X(m-(m-n)_+)} \otimes \Phi(m-n)\right] h(m), h(n) \right\rangle \geq 0$, for all $h \in \cH_0$.

We have shown that $\Phi$ is a positive definite function on $\diad$. It follows from the remarks before the statement of the theorem that there is a covariant isometric representation $V$ of $X$ on a Hilbert space $K \supseteq H$ satisfying items (1) and (2) in the statement of the theorem. Given an isometric dilation $V$ on $K$, it is
easy to see that one can cut off part of the space $K$ to obtain a minimal dilation. If we take $V$ to be a minimal dilation, (\ref{it:V*}) follows as well (this point is already discussed in various places for rather general semigroups, e.g. \cite{ShalitReprep}).

The proof of the final assertion of the theorem is not different from the proof given in the proof of the analogous
part in \cite[Theorem 5.2]{ShalitCP0Dil}, and we do not wish to repeat it here.
\end{proof}

\section{Extension of densely parameterized positive semigroups}\label{sec:extension}
Recall our notation: $R$ and $S$ are strongly
commuting CP-semigroups on $\cM \subseteq B(H)$, and $P = \{P_{(s,t)} \}_{(s,t)\in\Rpt}$ is given
by $P_{(s,t)} = R_s\circ S_t$. Using the results of the two sections preceding the current one, we will show in the
next one how an E-dilation
$\{\alpha_d\}_{d\in\diadp}$ can be constructed for the subsemigroup
$\{P_{d}\}_{d\in\diadp}$. In this section we prove the result that will allow us to extend
continuously $\{\alpha_d\}_{d\in\diadp}$ to a semigroup over $\Rpt$ (which will be an E-dilation for $P$).
We follow the ideas of SeLegue, who in \cite[pages 37-38]{SeLegue} proved that a semigroup of unital, normal
$*$-endomorphisms over the positive dyads, which is known to be weakly continuous only on a strong operator
dense subalgebra of $B(H)$, can be extended continuously to an E$_0$-semigroup (over $\Rp$). The crucial
step in SeLegue's argument was to use a result of Arveson \cite[Proposition 1.6]{Arv97} regarding convergence
of nets of normal states on $B(H)$. As we are interested in non-unital semigroups, we will have to generalize
a bit Arveson's result. The proof, however, remains very much the same.

\begin{lemma}\label{lem:arveson}(Arveson \cite[Proposition 1.6]{Arv97})
Let $\cM$ be a direct sum of type $I$ factors, let $\{\rho\}_{i}$ be a net of positive linear functionals, and let $\omega$ be a positive normal linear functional such that
$$\lim_{i} \rho_i(x) = \omega(x)$$
for all compact $x \in \cM$, and also
$$\lim_{i} \rho_i(1) = \omega(1) .$$
Then
$$\lim_{i} \|\rho_i - \omega\| = 0.$$
\end{lemma}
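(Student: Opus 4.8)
The plan is to reduce everything, for a fixed $\epsilon > 0$, to the behaviour of $\rho_i$ and $\omega$ on a single finite-rank (hence compact) corner of $\cM$, and to control the complementary part by a Cauchy--Schwarz estimate. Concretely, I would first use the normality of $\omega$ together with the assumption that $\cM = \bigoplus_k B(H_k)$ is a direct sum of type I factors to produce a finite-rank projection $p \in \cM$ with $\omega(1-p) < \epsilon$: since $\omega$ is given by a positive trace-class density on each summand whose total trace is $\omega(1) < \infty$, one may discard all but finitely many summands and then truncate inside each remaining factor by a finite-rank projection. It is precisely here that the hypothesis on $\cM$ is used---it guarantees that there are enough \emph{compact} projections $p$ with $\omega(1-p)$ arbitrarily small. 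This is the only structural input beyond Arveson's original $B(H)$ argument.

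With such a $p$ fixed, for $\|x\| \le 1$ I would write $\rho_i(x) - \omega(x) = [\rho_i(pxp) - \omega(pxp)] + \rho_i(x - pxp) - \omega(x - pxp)$ and estimate the three pieces separately. For the first piece, note that $p \cM p$ is finite-dimensional and that every element of it is compact, so $\rho_i \to \omega$ pointwise on $p\cM p$; since pointwise convergence of linear functionals on a finite-dimensional space is automatically norm convergence, one gets $\sup_{\|x\|\le 1}|\rho_i(pxp) - \omega(pxp)| \to 0$.

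For the tail pieces, I would use $x - pxp = (1-p)x + px(1-p)$ and the Cauchy--Schwarz inequality for the positive functional $\rho_i$: for $\|x\|\le 1$ each of $|\rho_i((1-p)x)|$ and $|\rho_i(px(1-p))|$ is bounded by $\rho_i(1)^{1/2}\rho_i(1-p)^{1/2}$, and similarly for $\omega$. The quantity $\rho_i(1-p)$ is controlled by writing $\rho_i(1-p) = \rho_i(1) - \rho_i(p)$: since $p$ is compact, $\rho_i(p) \to \omega(p)$, and since $\rho_i(1) \to \omega(1)$ by hypothesis, we get $\rho_i(1-p) \to \omega(1-p) < \epsilon$, while $\rho_i(1)$ stays bounded. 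Hence $\limsup_i \|\rho_i - \omega\| \le C\,\epsilon^{1/2}$ for a constant $C$ depending only on $\omega(1)$, and letting $\epsilon \to 0$ finishes the proof.

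The step I expect to be the only genuinely delicate one is the first: verifying that normality of $\omega$ together with the type I direct-sum structure really does yield a compact projection $p$ with $\omega(1-p) < \epsilon$ and, simultaneously, that the convergence $\rho_i(p) \to \omega(p)$ holds because $p$ lies in the compact ideal. Everything downstream is the same Cauchy--Schwarz bookkeeping as in Arveson's proof, the point being that neither the passage from $B(H)$ to a direct sum of type I factors nor the relaxation from states to general positive functionals (we never assume $\rho_i(1) = \omega(1) = 1$) affects these estimates.
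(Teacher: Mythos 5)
Your proposal is correct and follows essentially the same route as the paper's proof: choose a finite-rank projection $p$ with $\omega(1-p)<\epsilon$ by normality, get uniform convergence on the finite-dimensional corner $p\cM p$, control the tails $(1-p)x$ and $px(1-p)$ by Cauchy--Schwarz, and use $\rho_i(1)\to\omega(1)$ together with $\rho_i(p)\to\omega(p)$ (compactness of $p$) to bound $\rho_i(1-p)$. If anything, you are slightly more careful than the paper in two places: you spell out why the direct-sum-of-type-I-factors structure yields a \emph{compact} finite-rank $p$, and you retain the factor $\rho_i(1)^{1/2}$ in the Cauchy--Schwarz bound (the paper's estimate $|\mu((1-p)x)|\le \mu(1-p)^{1/2}\|x\|$ is literally valid only for $\mu(1)\le 1$, though this is harmless since $\rho_i(1)$ is eventually bounded).
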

\begin{proof}
We shall show that if $i$ is large enough then $ \|\rho_i - \omega\|$ is arbitrarily small. Let $\epsilon > 0$. Since $\omega$ is normal, there exists a finite rank projection $p \in \cM$ such that
\be\label{eq:1}
\omega(1-p) \leq \epsilon.
\ee
Since $p\cM p$ is a von Neumann algebra on the finite dimensional space $pH$, and since $pxp$ is compact for all $x \in \cM$, we have that
\be\label{eq:2}
\lim_i \sup_{x \in \cM_1}| \rho_i(pxp) - \omega(pxp)| = 0,
\ee
where $\cM_1$ denotes the unit ball of $\cM$.
Now,
\begin{align*}
\|\rho_i - \omega\| &= \sup_{x\in\cM_1} |\rho_i(x) - \omega(x)| \\
&\leq \sup_{x \in \cM_1}|\rho_i(pxp) - \omega(pxp)| + \sup_{x\in \cM_1}|\rho_i(x-pxp)|
+ \sup_{x\in \cM_1}|\omega(x-pxp)|.
\end{align*}
By (\ref{eq:2}), the first term in the last expression is smaller than $\epsilon$ when $i$ is large. We now estimate the second and third terms.
Write $x - pxp = (1-p)x + px(1-p)$. Then
$$\sup_{x\in \cM_1}|\mu(x-pxp)| \leq \sup_{x\in \cM_1}|\mu((1-p)x)|+ \sup_{x\in \cM_1}|\mu(px(1-p))|,$$
with $\mu = \rho_i$ or $\mu = \omega$. But by the Schwartz inequality,
$$|\mu((1-p)x)| \leq \mu(1-p)^{1/2}\|x\|$$
and
$$|\mu(px(1-p))| \leq \mu(1-p)^{1/2}\|px\| \leq \mu(1-p)^{1/2}\|x\| .$$
Thus, using (\ref{eq:1}), we obtain the following estimate for the third term:
$$\sup_{x\in \cM_1}|\omega(x-pxp)| \leq 2 \epsilon^{1/2} .$$
Now, $\rho_i(1) \rightarrow \omega(1)$ and $\rho_i(p) \rightarrow \omega(p)$, thus for all $i$ large enough,
$$\rho_i(1-p) \leq \omega(1-p) + \epsilon \leq 2 \epsilon,$$
so
$$\sup_{x\in \cM_1}|\rho_i(x-pxp)| \leq 4 \epsilon^{1/2} .$$
We conclude that for all $i$ large enough, $\|\rho_i - \omega\| \leq 6 \epsilon^{1/2} + \epsilon$. This completes the proof.
\end{proof}

We now give a somewhat generalized version of SeLegue's Theorem discussed above.
\begin{theorem}\label{thm:SeLegue}
(SeLegue, \cite[pp.~ 37-38]{SeLegue})
Let $\cM \subseteq B(H)$ be direct sum of type $I$ factors. Let $\cS$ be a dense subsemigroup of $\Rp$, and let $\phi = \{\phi_s\}_{s\in \cS}$ be a semigroup over $\cS$ acting on $\cM$, such that $\phi_s$ is a normal positive
linear map for all $s\in\cS$. Assume that for all compact $x \in \cM$ and all $\rho \in \cM_*$,
$$\lim_{\cS \ni s \rightarrow 0} \rho (\phi_s(x)) = \rho(x) \quad \text{\rm and} \quad  \lim_{\cS \ni s \rightarrow 0} \rho (\phi_s(1)) = \rho(1).$$
Then $\phi$ can be extended to a semigroup of normal positive linear maps $\hat{\phi} = \{\hat{\phi}_t\}_{t\geq 0}$
such that $\hat{\phi}_s = \phi_s$ for all $s \in \cS$, satisfying the continuity condition
\be\label{eq:weakstarcont}
\lim_{t \rightarrow t_0} \rho (\hat{\phi}_t (a)) = \rho (\hat{\phi}_{t_0} (a)) \,\,\, \text{\rm for all} \,\,\,  a \in \cM, \rho \in \cM_*.
\ee
Moreover, if $\phi$ consists of
contractions/completely positive maps/unital maps/$*$-endomorphisms then so does $\hat{\phi}$.
\end{theorem}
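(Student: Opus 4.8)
The plan is to transfer the whole problem to the predual $\cM_*$, build a strongly continuous semigroup there, and then recover $\hat\phi$ by taking adjoints. For $s \in \cS$ let $\Psi_s : \cM_* \to \cM_*$ denote the pre-adjoint map $\Psi_s(\rho) = \rho \circ \phi_s$, which is well defined precisely because each $\phi_s$ is normal (and bounded, being positive); since $\phi_{s+t} = \phi_s \circ \phi_t$, the family $\{\Psi_s\}_{s \in \cS}$ is a semigroup of bounded operators on $\cM_*$. The first step is to upgrade the given weak continuity into norm continuity at the origin. Fix a positive $\rho \in \cM_*$ and a net $\cS \ni s \to 0$; then the $\Psi_s(\rho)$ are positive functionals with $\Psi_s(\rho)(x) \to \rho(x)$ for every compact $x$ and $\Psi_s(\rho)(1) \to \rho(1)$, so Lemma \ref{lem:arveson} (applicable since $\cM$ is a direct sum of type $I$ factors) gives $\|\Psi_s(\rho) - \rho\| \to 0$. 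Splitting an arbitrary $\rho \in \cM_*$ into its positive parts, we obtain $\|\Psi_s(\rho) - \rho\| \to 0$ as $\cS \ni s \to 0$ for every $\rho \in \cM_*$.

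Next I would construct the extension on $\cM_*$. Because $\Psi_s(\rho) \to \rho$, each orbit is bounded, so for small enough $\delta$ the uniform boundedness principle gives $\sup\{\|\Psi_s\| : s \in \cS,\ s \le \delta\} < \infty$, and the semigroup law bounds $\|\Psi_s\|$ uniformly on every bounded interval. From $\Psi_{s'}(\rho) - \Psi_s(\rho) = \Psi_s\bigl(\Psi_{s'-s}(\rho) - \rho\bigr)$ and this local bound, the map $s \mapsto \Psi_s(\rho)$ is uniformly continuous on $\cS \cap [0,L]$, hence Cauchy as $\cS \ni s \to t$; completeness of $\cM_*$ then defines a bounded operator $\Psi_t$ by $\Psi_t(\rho) := \lim_{\cS \ni s \to t}\Psi_s(\rho)$. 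Passing to the limit in the semigroup law (the local bound allows exchanging limits) shows $\{\Psi_t\}_{t \ge 0}$ is a strongly continuous semigroup extending $\{\Psi_s\}_{s\in\cS}$. I then set $\hat\phi_t := (\Psi_t)^*$ on $\cM = (\cM_*)^*$: each $\hat\phi_t$ is automatically normal, $\hat\phi_s = \phi_s$ for $s \in \cS$, and $\hat\phi_{t+t'} = \hat\phi_t \circ \hat\phi_{t'}$. Since $\hat\phi_t(a)$ is the weak-$*$ limit of $\phi_s(a)$ along $\cS \ni s \to t$, condition (\ref{eq:weakstarcont}) is immediate, and positivity passes to the limit.

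Finally I would verify preservation of the extra structure. Contractivity, complete positivity and unitality all survive the weak-$*$ limit routinely (for complete positivity one notes that $[\hat\phi_t(a_{ij})]$ is a weak-$*$ limit of the positive matrices $[\phi_s(a_{ij})]$). The \emph{main obstacle} is the $*$-endomorphism property, because multiplication is not jointly weak-$*$ continuous; here a plain limiting argument fails. I would instead control the defect $d_t(a) := \hat\phi_t(a^*a) - \hat\phi_t(a)^*\hat\phi_t(a) \ge 0$, nonnegative by Kadison--Schwarz since $\hat\phi_t$ is a CP contraction. A direct computation from $\hat\phi_{t+t'} = \hat\phi_t \circ \hat\phi_{t'}$ yields $d_{t+t'}(a) = d_t(\hat\phi_{t'}(a)) + \hat\phi_t(d_{t'}(a))$, a sum of nonnegative terms. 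Choosing $s_n \in \cS$ with $s_n > t'$ and $s_n \to t'$ and writing $s_n = (s_n - t') + t'$, the vanishing of $d_{s_n}$ forces $\hat\phi_{s_n - t'}(d_{t'}(a)) = 0$; letting $n \to \infty$ and invoking (\ref{eq:weakstarcont}) with $s_n - t' \to 0$ gives $d_{t'}(a) = 0$ for all $t'$ and $a$. Since a CP contraction with vanishing defect is multiplicative (immediate from its Stinespring dilation) and $*$-preservation follows from weak-$*$ continuity of the adjoint, each $\hat\phi_t$ is a $*$-endomorphism, completing the proof.
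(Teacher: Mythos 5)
Your proof is correct, and its main construction is the same as the paper's: pass to the pre-adjoint semigroup on $\cM_*$, apply Lemma \ref{lem:arveson} to upgrade the weak hypotheses to norm continuity at $0$ (first for positive normal functionals, then for all of $\cM_*$ by decomposition), extend to $\Rp$ by local uniform continuity, and dualize to obtain $\hat\phi_t$; in fact your uniform-boundedness step is slightly more careful than the paper's, which simply treats the pre-adjoint maps as contractions even though the statement only assumes positivity. The genuine divergence is in the $*$-endomorphism clause. The paper gets multiplicativity by citing \cite{MarkiewiczShalit}: a weakly continuous CP-semigroup is automatically continuous in the point-strong operator topology, so $\phi_s(a)\phi_s(b) \rightarrow \hat\phi_t(a)\hat\phi_t(b)$ strongly, multiplication being jointly SOT-continuous on bounded sets. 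You avoid this entirely with the Schwarz-defect cocycle: $d_t(a) = \hat\phi_t(a^*a) - \hat\phi_t(a)^*\hat\phi_t(a) \geq 0$ (valid because each $\hat\phi_t$ is a weak-$*$ limit of the CP contractions $\phi_s$), the identity $d_{t+t'}(a) = d_{t}(\hat\phi_{t'}(a)) + \hat\phi_{t}(d_{t'}(a))$ with both summands nonnegative, vanishing of $d_{s_n}$ for $\cS \ni s_n \downarrow t'$, and weak-$*$ continuity at $0$ to force $d_{t'}(a) = 0$. This argument is correct and self-contained --- it trades the external point-strong continuity theorem for an elementary operator inequality --- whereas the paper's route has the side benefit of establishing strong-operator continuity of $\hat\phi$, which is of independent use.

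One caveat, shared equally by your proof and the paper's: the step $\Psi_{s'}(\rho) - \Psi_s(\rho) = \Psi_s\bigl(\Psi_{s'-s}(\rho) - \rho\bigr)$ requires $s' - s \in \cS$, i.e., that $\cS$ be closed under positive differences. This holds for the dyadic semigroups to which the theorem is applied in Section \ref{sec:Edilation}, but not for an arbitrary dense subsemigroup of $\Rp$; and without some such hypothesis the statement itself can fail, already for $\cM = \mathbb{C}$, since one can construct a scalar-valued semigroup over a dense, non-difference-closed $\cS$ that is continuous at $0$ yet admits no continuous extension. So this hypothesis must be read into the theorem; it does not distinguish your argument from the paper's.
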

\begin{proof}
As $\phi_s$ is normal for all $s$, there is a contraction semigroup $T = \{T_s\}_{s\in\cS}$ acting on the predual
$\cM_*$ of $\cM$ such that $T_s^* = \phi_s$ for all $s\in \cS$.
The continuity of $\phi$ at $0$ implies that for all compact $a\in \cM$ and all $\rho \in \cM_*$,
$$T_s \rho (a) \rightarrow \rho(a) \quad \text{\rm and} \quad T_s \rho (1) \rightarrow \rho(1)$$
as $\cS \ni s \rightarrow 0$. Let $\rho$ be a normal state in $\cM_*$. For all $s \in \cS$,
the functional $T_s \rho = \rho \circ \phi_s$ is a positive and normal, because $\phi$ is positive and normal.
Applying Lemma \ref{lem:arveson} to the net $\{T_s \rho\}_{s\in\cS /\{0\}}$, we
obtain that
$$\lim_{\cS \ni s \rightarrow 0}\|T_s \rho - \rho\| = 0 .$$
Any $\rho \in \cM_*$ is a linear combination of normal states, thus
$\lim_{\cS \ni s \rightarrow 0}\|T_s \rho - \rho\| = 0$ for all $\rho \in \cM_*$,
and it follows that for all $s_0 \in \cS$, $\rho \in M_*$,
$$\lim_{\cS \ni s \rightarrow s_0}\|T_s \rho - T_{s_0}\rho\| = 0 .$$
In fact, by standard operator-semigroup methods, for every $\rho$ the map $\cS \ni s \mapsto T_s \rho \in M_*$ is uniformly continuous on bounded intervals, thus it may be extended
to a unique uniformly continuous map $\Rp \longrightarrow M_*$. For all $t \in \Rp$ this gives rise to a well defined
contraction $\hat{T}_t$, such that for $s \in \cS$, $\hat{T}_s = T_s$. It is easy to see that $\{\hat{T}\}_{t \geq 0}$ is a semigroup.

Now define $\hat{\phi}_t = \hat{T}^*_t$. Then $\hat{\phi} = \{\hat{\phi}_t\}_{t\geq 0}$ is a semigroup of normal linear maps
extending $\phi$ and satisfying the continuity condition (\ref{eq:weakstarcont}). With (\ref{eq:weakstarcont}) in mind,
the final claim of the theorem is quite clear,
except, perhaps, the part about $*$-endomorphisms. Assume that $\phi$ is a semigroup of $*$-endomorphisms. For $t \in \Rp$,
$a,b \in \cM$, we have
$$\hat{\phi}_t(ab) = \lim_{\cS \ni s \rightarrow t} \phi_s(ab) = \lim_{\cS \ni s \rightarrow t} \phi_s(a)\phi_s(b) ,$$
where convergence is in the weak operator topology. But $\hat{\phi}$ is a CP-semigroup, thus thus for all $x \in \cM$,
$\phi_s(x)$ converges to $\phi_t(x)$ in the \emph{strong} operator topology as $s \rightarrow t$ (see \cite{MarkiewiczShalit}), so
$$\hat{\phi}_t(ab) =  \lim_{\cS \ni s \rightarrow t} \phi_s(a)\phi_s(b) = \hat{\phi}_t(a)\hat{\phi}_t(b) ,$$
because on bounded sets of $\cM$ multiplication is jointly continuous with respect to the strong operator topology.
\end{proof}

\section{E-dilation of a strongly commuting pair of CP-semigroups}\label{sec:Edilation}

In this section we shall prove our main result, Theorem \ref{thm:main}.
But first we recall without proof two useful results from \cite{ShalitCP0Dil}. The first one appeared
as Lemma \ref{lem:semigroup} above. The second is the following.

\begin{proposition}\label{prop:continuity}
(\cite[Proposition 6.5]{ShalitCP0Dil}) Let $\{R_t\}_{t\geq0}$ and
$\{S_t\}_{t\geq0}$ be two CP-semigroups on $\cM \subseteq B(H)$, where $H$ is a separable Hilbert space. Then the
two-parameter CP-semigroup $P$ defined by
$$P_{(s,t)} := R_s S_t$$
is strongly continuous, that is, for all $a \in \cM$, the map $\Rpt \ni (s,t) \mapsto P_{(s,t)}(a)$ is strongly continuous. Moreover, $P$ is \emph{jointly} continuous on $\Rpt \times \cM$, endowed with the standard$\times$strong-operator topology.

\end{proposition}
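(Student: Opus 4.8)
The plan is to prove point--strong-operator continuity first and then upgrade to joint continuity, the unifying engine being the \emph{pre-adjoint} semigroups. Since $R$ and $S$ are CP-semigroups, each $R_s$ and $S_t$ is normal and thus admits a contractive pre-adjoint on $\cM_*$, which I denote $R_{s*}$ and $S_{t*}$. The first step is to record that, because $R$ and $S$ are weakly continuous, the pre-adjoint semigroups $\{R_{s*}\}$ and $\{S_{t*}\}$ are $C_0$-semigroups of contractions on $\cM_*$ (norm-strongly continuous). When $\cM = B(H)$ this is exactly Lemma \ref{lem:arveson}: for a fixed normal state $\rho$, weak continuity gives $(R_{s*}\rho)(x) \to (R_{s_0*}\rho)(x)$ pointwise and $(R_{s*}\rho)(1) \to (R_{s_0*}\rho)(1)$, so $\|R_{s*}\rho - R_{s_0*}\rho\| \to 0$ by the lemma, and general normal functionals follow by uniform boundedness; the general von Neumann algebra case is \cite{MarkiewiczShalit}. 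In particular every orbit $s \mapsto R_{s*}\mu$ is norm continuous, and likewise for $S$.

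The main technical device is the Kadison--Schwarz inequality for contractive completely positive maps: from the Stinespring form $\Phi(\cdot) = V^*\pi(\cdot)V$ with $\|V\| \le 1$ one gets $\Phi(c)^*\Phi(c) \le \Phi(c^*c)$, so that for the vector functional $\omega_h = \langle \,\cdot\, h, h\rangle$,
\begin{equation*}
\|\Phi(c) h\|^2 \le \langle \Phi(c^*c) h, h\rangle = (\omega_h \circ \Phi)(c^*c).
\end{equation*}
This converts every strong-operator estimate into an estimate on the positive functionals $\omega_h \circ P_{(s,t)} = P_{(s,t)*}\,\omega_h$, which by the first step depend norm-continuously on $(s,t)$.

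Now fix $a \in \cM$, $h \in H$ and $(s_0,t_0)$, and for $(s,t) \to (s_0,t_0)$ write
\begin{equation*}
P_{(s,t)}(a)h - P_{(s_0,t_0)}(a)h = R_s\big(S_t(a) - S_{t_0}(a)\big)h + \big(R_s - R_{s_0}\big)\big(S_{t_0}(a)\big)h .
\end{equation*}
The second term tends to $0$ by the single-variable strong continuity of $R$, itself a consequence of step one. For the first term, put $c_t = S_t(a) - S_{t_0}(a)$ and $d_t = c_t^*c_t$; the Kadison--Schwarz bound gives
\begin{equation*}
\|R_s(c_t)h\|^2 \le (R_{s*}\omega_h)(d_t) \le \|R_{s*}\omega_h - R_{s_0*}\omega_h\|\,\|d_t\| + (R_{s_0*}\omega_h)(d_t).
\end{equation*}
Here $\|d_t\| \le 4\|a\|^2$ is bounded, $\|R_{s*}\omega_h - R_{s_0*}\omega_h\| \to 0$ by step one, and $d_t \to 0$ $\sigma$-weakly (since $c_t \to 0$ strongly and boundedly), so the fixed normal functional $R_{s_0*}\omega_h$ sends $d_t$ to $0$. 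Hence the first term also vanishes, proving strong continuity.

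Finally, joint continuity on $\Rpt \times \cM$ follows by the same mechanism: over a compact parameter neighbourhood $K$ of $(s_0,t_0)$ the map $(s,t) \mapsto P_{(s,t)*}\,\omega_h$ is norm continuous, hence has norm-compact image, and a finite $\epsilon$-net argument shows that a norm-compact set of normal positive functionals is uniformly small on bounded $\sigma$-weakly null nets. Adding the term $P_{(s,t)}(a_n - a)h$ for $a_n \to a$ strongly on bounded sets and estimating it via Kadison--Schwarz as above, this uniform smallness over $K$ disposes of it. I expect the genuine obstacle to be the mixed term $R_s(c_t)h$, in which \emph{both} the map and its argument move: completely positive maps are not equicontinuous for the strong operator topology, so the naive $C_0$-semigroup joint-continuity argument fails, and it is exactly the passage to the pre-adjoint (norm continuity of $R_{s*}\omega_h$) together with the Kadison--Schwarz inequality that rescues the estimate.
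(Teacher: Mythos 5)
Your proof is correct, but note that there is no proof in this paper to compare it against: Proposition \ref{prop:continuity} is explicitly recalled \emph{without proof} from \cite[Proposition 6.5]{ShalitCP0Dil}. Your route is essentially that of the cited source and of \cite{MarkiewiczShalit}: norm ($C_0$-) continuity of the pre-adjoint semigroups (Lemma \ref{lem:arveson} when $\cM = B(H)$, \cite{MarkiewiczShalit} in general), combined with the Schwarz inequality $\Phi(c)^*\Phi(c)\leq\Phi(c^*c)$ for contractive CP maps to control the mixed term $R_s\bigl(S_t(a)-S_{t_0}(a)\bigr)h$ in which both the map and its argument move --- which is indeed the only genuine obstacle. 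Three minor points. (i) In your first step, general normal functionals follow from normal states by \emph{linearity} (every element of $\cM_*$ is a finite linear combination of positive normal functionals), not by uniform boundedness. (ii) Your $\epsilon$-net argument tacitly uses that $(s,t)\mapsto P_{(s,t)*}\,\omega_h$ is \emph{jointly} norm continuous; this does follow from the one-parameter statements, since $P_{(s,t)*}=S_{t*}R_{s*}$ and all pre-adjoints are contractions, but it should be stated. (iii) You correctly read the joint continuity assertion as joint continuity on $\Rpt\times(\text{bounded subsets of }\cM)$; this restriction is not cosmetic --- a normal CP map need not be SOT-continuous on unbounded sets, since SOT-convergent nets (unlike sequences) need not be bounded --- and the bounded form is exactly what is invoked later in the proof of Theorem \ref{thm:main}.
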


Now we are ready to prove the main result:
\begin{theorem}\label{thm:main}
Let $\{R_t\}_{t\geq0}$ and $\{S_t\}_{t\geq0}$ be two strongly
commuting CP-semigroups on $B(H)$, where $H$ is a separable Hilbert space. Then the two-parameter CP-semigroup $P$ defined by
$$P_{(s,t)} := R_s S_t$$
has a minimal E-dilation $(K,u,B(K),\alpha)$, where $K$ is separable.
\end{theorem}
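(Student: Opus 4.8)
The strategy is to assemble the three main ingredients developed in Sections \ref{sec:repSC}--\ref{sec:extension} in the order they were built, and then to address the new non-unital obstruction in the final compression step. First I would invoke Theorem \ref{thm:rep_SC} to represent $P$ via a completely contractive covariant representation $T$ of a two-parameter product system $X$ of $\cM'$-correspondences (here $\cM = B(H)$, so $\cM' = \mathbb{C}I$, which simplifies matters), realizing $P_{(s,t)}(a) = \tT_{(s,t)}(I_{X(s,t)} \otimes a)\tT_{(s,t)}^*$. Since $H$ is separable, the spaces $X(s,t) \otimes_\sigma H$ are separable, so Theorem \ref{thm:IsoDilDiad} applies to the \emph{restriction} of $T$ to the dyadic subsemigroup $\diadp$, yielding a minimal isometric representation $V$ of $X\big|_{\diadp}$ on a separable $K \supseteq H$ satisfying items (1)--(4) there.

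\emph{From the isometric dilation to an E-semigroup over $\diadp$.} Next I would set $\cR = V_{(0,0)}(\cM')' = (\mathbb{C}I)' = B(K)$ and define, for $d \in \diadp$,
\bes
\alpha_d(a) := \widetilde{V}_d (I_{X(d)} \otimes a) \widetilde{V}_d^* \,\, , \,\, a \in B(K).
\ees
By Lemma \ref{lem:semigroup} (applied with $W = V$, whose $V_0$ is unital since $\sigma$ is the identity representation of the unital algebra $\mathbb{C}I$), $\{\alpha_d\}_{d\in\diadp}$ is a semigroup of $*$-endomorphisms of $B(K)$. The dilation identities (items (2) and (4) of Theorem \ref{thm:IsoDilDiad}) give, with $p = uu^* = P_H$, the relation $P_d(pap) = p\,\alpha_d(a)\,p$ for $a \in B(K)$, $d \in \diadp$; one checks $p$ is coinvariant. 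So far this is the dyadic E-dilation.

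\emph{The continuous extension and the main obstacle.} The remaining task is to extend $\{\alpha_d\}_{d\in\diadp}$ to an E-semigroup over all of $\Rpt$. Applying Theorem \ref{thm:SeLegue} separately in each of the two dyadic directions, I would extend to a jointly continuous family $\{\hat\alpha_{(s,t)}\}_{(s,t)\in\Rpt}$ of normal $*$-endomorphisms of $B(K)$; the hypotheses of Theorem \ref{thm:SeLegue} (weak continuity at $0$ on compact operators and on the unit) must be verified for the restricted dyadic endomorphisms, using the joint continuity of $P$ itself (Proposition \ref{prop:continuity}) transported up to $K$. The genuinely hard step, where non-unitality bites, is precisely this verification that the dyadic endomorphisms $\alpha_d$ are weakly continuous as $d \to 0$ through the dyads in the sense Theorem \ref{thm:SeLegue} demands: because the $R_s, S_t$ are not unit-preserving, the isometric representation $V$ is \emph{not} fully coisometric, so $\alpha_d(1)$ need not tend to $1$, and one cannot simply read off continuity from the coisometric case as in \cite{ShalitCP0Dil}. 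I expect one must argue that $\alpha_d(x) \to x$ weakly for compact $x$ by exploiting the minimality clause (item (3)) to reduce continuity statements on $K$ to the already-known continuity of $P$ on $H$ via the compression $p$, and separately control $\alpha_d(1)$ using the asymptotics of $\tT_d^*\tT_d$. Once continuity is secured, Theorem \ref{thm:SeLegue} delivers $\hat\alpha$ over $\Rpt$; the compression identity passes to all of $\Rpt$ by density and continuity, giving an E-dilation $(K,u,B(K),\hat\alpha)$.

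\emph{Minimality.} Finally I would promote this to a \emph{minimal} dilation. The minimality of $V$ (item (3) of Theorem \ref{thm:IsoDilDiad}) should translate into the generation condition (\ref{eq:W^*-generator}), and I would verify the central-support condition of Definition \ref{def:min_dil_Arv} and invoke Proposition \ref{prop:equiv_def_min} to conclude Definition \ref{def:min_dil}; alternatively one cuts down $K$ to a reducing subspace on which the dilation becomes minimal, checking that separability and all the identities survive the restriction.
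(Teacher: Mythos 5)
Your skeleton agrees with the paper's: Theorem \ref{thm:rep_SC}, then Theorem \ref{thm:IsoDilDiad} applied to the dyadic restriction, then Lemma \ref{lem:semigroup} and the compression identity, then Theorem \ref{thm:SeLegue} to pass from $\diadp$ to $\Rpt$. But the step you yourself flag as ``the genuinely hard step'' is exactly the one you do not carry out, and the bridge you sketch for it would not work. Compressing to $H$ only controls $p\,\alpha_d(a)\,p = P_d(pap)$; the hypotheses of Theorem \ref{thm:SeLegue} --- weak convergence $\alpha_d(x)\rightarrow x$ for compact $x\in B(K)$ and $\alpha_d(1)\rightarrow 1$ --- are statements about vectors in all of $K$, and item (3) of Theorem \ref{thm:IsoDilDiad} alone does not reduce them to the continuity of $P$ on $H$. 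What the paper actually needs is: (i) the stronger spanning relations (\ref{eq:K}) and (\ref{eq:partition1}), which are not formal consequences of minimality of $V$ but are proved by going into the details of the Muhly--Solel construction; (ii) the key new idea for the non-unital case, namely the subspace $K_0$ spanned by products $\alpha_{t_1}(m_1)\cdots\alpha_{t_k}(m_k)h$ with the $t_i$ ranging over the \emph{strictly} positive dyads $\diadpp$, together with a delicate norm-approximation argument ($g_t\rightarrow g$) showing $K=K_0$; this is what converts the compression information into the weak-operator convergence (\ref{eq:p_0}) on all of $K$, and hence into strong convergence $\alpha_t(a)\rightarrow a$ for $a\in B(H)$; (iii) a bridge from continuity on the $C^*$-algebra $\cA$ generated by $\bigcup_{s}\alpha_s(B(H))$ to continuity on compact operators: one shows $W^*(\cA)=B(K)$, so $\cA$ acts irreducibly and contains nonzero compacts, hence contains \emph{all} compacts by \cite[Proposition 10.4.10]{KR}; and (iv) the unit condition --- your idea of controlling $\alpha_d(1)$ via the asymptotics of $\tT_d^*\tT_d$ cannot work, since that operator only sees the corner $pB(K)p$, whereas the paper obtains $\alpha_t(1)\rightarrow 1$ from the monotonicity $\alpha_t(1)\geq\alpha_s(1)$ for $t\leq s$ applied to the total set $\{\alpha_s(b)h : s\in\diadpp,\, b\in B(K),\, h\in H\}$, which again rests on $K=K_0$.

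A second, smaller omission: the theorem asserts the dilating algebra is all of $B(K)$, so one must prove $\cR := W^*\bigl(\bigcup_{s\in\diadp}\alpha_s(B(H))\bigr) = B(K)$; the paper does this via (\ref{eq:K}), showing any projection $q\in\cR'$ satisfies $qp\in\{0,I_H\}$ and hence $q\in\{0,I_K\}$, so $\cR'=\mathbb{C}I_K$. This is not optional bookkeeping: it yields minimality under both Definitions \ref{def:min_dil} and \ref{def:min_dil_Arv} simultaneously, and it is an ingredient of step (iii) above (irreducibility of $\cA$). Your alternative of ``cutting down $K$ to a reducing subspace'' is unnecessary here and would in any case require re-verifying that the cut-down algebra is again a full $B(K')$.
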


\begin{proof}
Let $X$ and $T$ be the product system of Hilbert spaces and the
product system representation given by Theorem \ref{thm:rep_SC}.
We consider the sub-product system $\check{X}=\{X(s)\}_{s\in\diadp}$
of $X$ represented on $H$ by the sub-representation $\check{T}=\{T_s\}_{s\in\diadp}$
of $T$. The proof of \cite[Proposition 4.2]{MS02} shows that $X(t_1,t_2) = E(t_1) \otimes F(t_2)$ is a separable Hilbert space for all $t_1,t_2 \geq 0$, and it follows that for all $s \in \diadp$ the Hilbert space
$X(s) \otimes_{T_0} H$ is separable. By Theorem
\ref{thm:IsoDilDiad}, there is a minimal isometric
dilation ${V} = \{{V}_s\}_{s\in\diadp}$ of $\check{T}$, representing $\check{X}$ on a Hilbert space ${K} \supseteq H$.
We define a semigroup ${\alpha}=\{{\alpha}_s\}_{s\in\diadp}$ on $B({K})$ by
\bes
{\alpha}_s(a) =
\widetilde{{V}}_s(I \otimes a)\widetilde{{V}}_s^* \,\, , \,\, s\in\diadp ,
a\in B({K}).
\ees
By Lemma \ref{lem:semigroup}, ${\alpha}$ is a semigroup of normal
$*$-endomorphisms on $B({K})$. Denote by $p$ the orthogonal projection
of ${K}$ onto $H$. It is clear that $B(H)$ is the corner $B(H) = pB({K})p \subseteq B({K})$.
To see that ${\alpha}$ is a dilation of $\{P_{s}\}_{s \in \diadp}$, we fix $s \in \diadp$ and $b \in B({K})$ and we compute
\begin{align*}
P_s(p b p)
&= \widetilde{T}_s(I \otimes p b p) \widetilde{T}_s^*  \\
(*)&= p \widetilde{{V}}_s(I \otimes p)(I \otimes b)(I \otimes p) \widetilde{{V}}_s^* p  \\
(**)&= p \widetilde{{V}}_s(I \otimes  b) \widetilde{{V}}_s^* p  \\
&= p {\alpha}_s (b) p .
\end{align*}
The equalities marked by (*) and (**) are justified by items \ref{it:dilation} and \ref{it:V*} of Theorem \ref{thm:IsoDilDiad}, respectively. 

Up to this point, the proof has been simple and straightforward. One may guess that our next step is
to show that ${\alpha}$ is continuous and to extend it
to a semigroup over $\Rpt$, using Theorem \ref{thm:SeLegue}. This is true, but carrying out this plan turns out to be
rather delicate.


We define two (``one-parameter") semigroups $\beta =
\{\beta_t\}_{t\in \mathbb{D}_+}$ and $\gamma = \{\gamma_t\}_{t\in
\mathbb{D}_+}$ on $B(K)$ by
\be\label{eq:betagamma}
\beta_t = \alpha_{(t,0)} \quad  \text{\rm and} \quad \gamma_t =
\alpha_{(0,t)} .
\ee
By Proposition \ref{prop:continuity}, if we will be able to extend $\beta$ and $\gamma$
to continuous E-semigroups $\hat{\beta}$ and $\hat{\gamma}$ over $\Rp$, then the semigroup $\hat{\alpha} = \{\hat{\alpha}_{(s,t)}\}_{(s,t)\in \Rpt}$ given by
$$\hat{\alpha}_{(s,t)} = \hat{\beta}_s \circ \hat{\gamma}_t $$
will be the sought after E-dilation of $P$. The rest of the proof is mostly
dedicated to showing that $\beta$ and $\gamma$ can be continuously extended. As we demonstrate the
extendability of $\beta$ and $\gamma$, we show that $(p, B(K), \alpha)$ is a minimal dilation of $(B(H),\{P_s\}_{s\in\cS})$, and this will complete the proof of Theorem \ref{thm:main}.

Because ${V}$ is a minimal dilation of $\check{T}$, we have
$$K := \bigvee_{s \in \diadp}{V}_s (X(s))H .$$
An important observation is this:
\be\label{eq:K}
K = \bigvee \left\{\alpha_{t_1}(m_1)\cdots\alpha_{t_k}(m_k)h:
k \in \mathbb{N}, t_i\in\diadp, m_i\in B(H), h \in H \right\}.
\ee
(When $k=0$, we interpret the product $\alpha_{t_1}(m_1)\cdots\alpha_{t_k}(m_k)h$ as $h$). In Step 2 of the proof of Theorem 6.6 from \cite{ShalitCP0Dil} this equality is proved (in that paper the situation was a little simpler and one did not have to consider products $\alpha_{t_1}(m_1)\cdots\alpha_{t_k}(m_k)h$ with $k=0$. The proof, however, holds in this case as well, as long as one does takes such products). We do not
repeat the arguments given there, because to understand them one must go into the details of
the Muhly-Solel construction (of a product system representation representing a CP-semigroup).

In fact, In Step 2 of the proof of Theorem 6.6 from \cite{ShalitCP0Dil}, a slightly stronger assertion than (\ref{eq:K})
is proved, namely
\be\label{eq:partition1}
K = \bigvee \alpha_{(s_m,t_n)}(B(H)) \alpha_{(s_m,t_{n-1})}(B(H)) \cdots \alpha_{(s_m,t_1)}(B(H)) \alpha_{(s_{m},0)}(B(H))  \cdots \alpha_{(s_1,0)}(B(H)) H
\ee
where in the right hand side of the above expression we run over all pairs $(s,t)\in\diadp$ and all partitions $\{0=s_0 < \ldots < s_m=s\}$ and $\{0=t_0 < \ldots < t_n = t \}$ of $[0,s]$ and $[0,t]$.

Using (\ref{eq:K}), we can show that $(p, B(K), \alpha)$ is a minimal dilation.
Define
\be\label{eq:define R}
\cR =
W^*\left(\bigcup_{s\in\diadp}\alpha_s(B(H)) \right) .
\ee
Note that $K = [\cR H]$.
But the central projection
of $p$ in $\cR$ is the projection on $[\cR p K] = [\cR H] = K$, that is $I_K$. We will now show that $\cR = B(K)$, and this will prove that the central projection of $p$ in $B(K)$ is $I_K$, so by both definitions \ref{def:min_dil} and \ref{def:min_dil_Arv} $(p, B(K), \alpha)$ is a minimal dilation.

To see that $\cR = B(K)$, let $q\in B(K)$ be a projection in $\cR'$. In particular, $pq = qp = pqp$, so $qp$ is a projection
$B(H)$ which commutes with $B(H)$, thus $qp$ is either $0$ or $I_H$.

If it is $0$ then for all $t_i\in\diadp, m_i\in B(H), h \in H$,
$$q \alpha_{t_1}(m_1)\cdots\alpha_{t_k}(m_k)h = \alpha_{t_1}(m_1)\cdots\alpha_{t_k}(m_k) qp h = 0 ,$$
so $qK = 0$ and $q=0$.

If $qp = I_H$ then for all $0<t_i\in\diadp, m_i\in B(H), h \in H$,
\begin{align*}
q \alpha_{t_1}(m_1)\cdots\alpha_{t_k}(m_k)h
&= \alpha_{t_1}(m_1)\cdots\alpha_{t_k}(m_k) qp h \\
&= \alpha_{t_1}(m_1)\cdots\alpha_{t_k}(m_k)h ,
\end{align*}
so $qK = K$ and $q=I_K$. We see that the only projections in $\cR'$ are $0$ and $I_K$, so
$\cR' = \mathbb{C}\cdot I_K$, hence $\cR = \cR'' = B(K)$. This completes the proof of minimality.

We return to showing that $\beta$ and $\gamma$ can be continuously extended to $\Rp$. Let
$$\dpp = \left\{\frac{m}{2^n} : 0<m,n\in \mathbb{N} \right\},$$
put $\diadpp = \dpp \times \dpp$, and define
\be\label{eq:KS}
K_0 = \bigvee \left\{\alpha_{t_1}(m_1)\cdots\alpha_{t_k}(m_k)h:
k \in \mathbb{N}, t_i\in\diadpp, m_i\in B(H), h \in H \right\}.
\ee
We shall use (\ref{eq:partition1}) to prove that $K = K_0$. First, let us show that $H \subseteq K_0$.
Let
$$G_0 = \bigvee_{t\in\diadpp}\alpha_{t}(B(H))H $$
and $G = H \vee G_0$.
For $t \leq s \in \diadpp$, $a \in B(H)$ and $h,g \in H$, we find that
\begin{align*}
\langle \alpha_{t}(p)h,\alpha_{s}(a)g \rangle &=
\langle \alpha_{s}(a^*) \alpha_{t}(p)h, g \rangle \\
&= \langle \alpha_{t}( \alpha_{s-t}(a^*) p)h, g \rangle \\
&= \langle P_{t}( P_{s-t}(pa^* p) p)h, g \rangle \\
&\stackrel{t\rightarrow 0}{\longrightarrow} \langle P_{s}(pa^* p) h, g \rangle \\
&= \langle h,\alpha_{s}(a)g \rangle .
\end{align*}
Similarly,
$$\langle \alpha_{t}(p)h,g \rangle  \stackrel{t\rightarrow 0}{\longrightarrow} \langle h,g \rangle .$$
We see that in $G$, $\alpha_{t}(p)h \rightarrow h$ weakly, thus $H$ is in the weak closure of $G_0$ in $G$.
The weak closure being equal to the strong closure, we have $H \subseteq G_0 \subseteq K_0$.

The set
$$\{\alpha_{s}(b) h : s\in \diadpp, b\in \bigcup_{t\in\diadp}\alpha_t(B(K)), h\in H \}$$
is total in $K_0$. To see this, note that every element of the form
$$\alpha_{t_1}(m_1)\cdots\alpha_{t_k}(m_k)h ,$$
with $t_i\in\diadpp, m_i\in B(H)$ and $h \in H$,
can be written as
$$\alpha_s (\alpha_{t_1-s}(m_1)\cdots\alpha_{t_k-s}(m_k)) h ,$$
where $s \in \diadpp$ is smaller than $t_i$, $i=1, 2, \ldots, k$.

Let $\alpha_{s_1}(a_1)h_1$ and $\alpha_{s_2}(a_2)h_2$
be in $K_0$, where $s_1, s_2 \in \diadpp$,
$a_1,a_2 \in B(K)$ and $h_1,h_2 \in H$. Take $a \in B(H)$ and $t \in
\diadp$ such that $t < s_1, s_2$. Now compute:
\begin{align*}
\langle\alpha_t(a)\alpha_{s_1}(a_1)h_1,\alpha_{s_2}(a_2)h_2\rangle
&= \langle \alpha_{s_2}(a_2^*) \alpha_{t}(a)\alpha_{s_1}(a_1)h_1,h_2\rangle \\
&= \langle \alpha_{t}\left(\alpha_{s_2-t}(a_2^*) a \alpha_{s_1-t}(a_1)\right)h_1,h_2\rangle \\
&= \langle P_{t}\left(p\alpha_{s_2-t}(a_2^*) pap\alpha_{s_1-t}(a_1)\right)h_1,h_2\rangle \\
&= \langle P_{t}\left(P_{s_2-t}(p a_2^* p) a P_{s_1-t}(p a_1 p)\right)h_1,h_2\rangle \\
(*)&\stackrel{t\rightarrow 0}{\longrightarrow} \langle P_{s_2}(p a_2^* p) a P_{s_1}(p a_1 p) h_1,h_2\rangle \\
&= \langle a \alpha_{s_1}(a_1)h_1,\alpha_{s_2}(a_2) h_2\rangle .
\end{align*}
If we let $p_0$ denote the orthogonal projection of $K$ on $K_0$, we see that
$p_0 \alpha_{t}(a) p_0 \rightarrow p_0 a p_0$ in the weak operator topology as $t
\rightarrow 0$, for all $a \in B(H)$ (the convergence in $(*)$ is due to Proposition \ref{prop:continuity}).
Since $H \subseteq K_0$, one has $p \leq p_0$, and $p_0 a p_0 = a$ for all $a \in B(H)$, thus
\be\label{eq:p_0}
\forall a \in B(H). p_0 \alpha_{t}(a) p_0 \rightarrow a \,\, \text{as} \,\, t \rightarrow 0,
\ee
where convergence is in the weak operator topology.

By (\ref{eq:partition1}), $K$ is spanned by elements of the form
\be\label{eq:g}
g = \alpha_{s_1}(\alpha_{s_2}(\cdots (\alpha_{s_m}(a_m) a_{m-1})\cdots )a_1)h ,
\ee
for $a_i \in B(H), s_i \in \diadp, i=1,\ldots m$ and $h\in H$. Let us show how such an element can be
approximated in norm arbitrarily well by elements of the same form with all $s_i$'s in $\diadpp$ (it is clear that if
all $s_i$'s are in $\diadpp$, then this element is in $K_0$).

Assume that we wish to approximate a fixed element
$g$ as in (\ref{eq:g})
by elements
of the from $\alpha_{t_1}(\cdots(\alpha_{t_m}(b_m) b_{m-1}\cdots)b_1)h'$, where $t_m \in \diadpp$.
We consider
$$g_t = \alpha_{s_1}(\alpha_{s_2}(\cdots(\alpha_{s_m+t}(a_m) a_{m-1})\cdots a_2)a_1)h$$
with $t \in \diadpp$, and compute
\begin{align*}
\langle g_t,g_t \rangle
&=\big\langle \alpha_{s_1}(\cdots(\alpha_{s_m+t}(a_m) a_{m-1}\cdots)a_1)h, \alpha_{s_1}(\cdots(\alpha_{s_m+t}(a_m) a_{m-1}\cdots)a_1)h \big\rangle \\
&= \big\langle\alpha_{s_1}(a_1^* \cdots a_{m-1}^* \alpha_{s_m+t}(a_m^*)\cdots) \alpha_{s_1}(\cdots(\alpha_{s_m+t}(a_m) a_{m-1}\cdots )a_1)h, h \big\rangle \\
&= \big\langle\alpha_{s_1}(a_1^* \alpha_{s_2}(a_2^* \cdots a_{m-1}^* \alpha_{s_m+t}(a_m^* a_m)a_{m-1} \cdots a_{2}))a_1)h, h \big\rangle \\
&= \big\langle\alpha_{s_1}(a_1^* \alpha_{s_2}(a_2^* \cdots a_{m-1}^* P_{s_m+t}(p a^*_m a_m p)a_{m-1} \cdots a_{2}))a_1)h, h \big\rangle \\
&\stackrel{t\rightarrow 0}{\longrightarrow}\big\langle\alpha_{s_1}(a_1^* \alpha_{s_2}(a_2^* \cdots a_{m-1}^* P_{s_m}(p a^*_m a_m p)a_{m-1} \cdots a_{2}))a_1)h, h \big\rangle \\
&= \big\langle\alpha_{s_1}(a_1^* \alpha_{s_2}(a_2^* \cdots a_{m-1}^* \alpha_{s_m}(a_m^* a_m) a_{m-1} \cdots a_{2}))a_1)h, h \big\rangle \\
&= \langle g,g \rangle .
\end{align*}
In addition , we have
\begin{align*}
\langle g,g_t \rangle
&=\big\langle \alpha_{s_1}(\cdots \alpha_{s_{m-1}}(\alpha_{s_m}(a_m) a_{m-1}) \cdots )a_1)h, \alpha_{s_1}(\cdots \alpha_{s_{m-1}}(\alpha_{s_m + t}(a_m) a_{m-1}) \cdots )a_1)h \big\rangle \\
&= \big\langle\alpha_{s_1}(a_1^* \cdots \alpha_{s_{m-1}}(a_{m-1}^* \alpha_{s_m+t}(a^*_m))\cdots) \alpha_{s_1}(\cdots \alpha_{s_{m-1}}(\alpha_{s_m}(a_m) a_{m-1}) \cdots )a_1)h, h \big\rangle \\
&= \big\langle\alpha_{s_1}(a_1^* \cdots \alpha_{s_{m-1}}(a_{m-1}^* \alpha_{s_m}(\alpha_{t}(a_m^*) a_m) a_{m-1})\cdots )a_1)h, h \big\rangle.
\end{align*}
But $a_m = p a_m = p_0 p a_m$, and $p_0$ commutes with $\alpha_t(a_m^*)$, thus
$$\alpha_{t}(a^*_m)a_m = p_0 \alpha_{t}(a^*_m)p_0 a_m \rightarrow a^*_m a_m $$
in the weak operator topology as $t \rightarrow 0$, by (\ref{eq:p_0}). Since $\alpha_{s_i}$ is normal for all $i$, we obtain
$$\langle g,g_t \rangle \stackrel{t\rightarrow 0}{\longrightarrow} \langle g, g \rangle .$$
This allows us to conclude that
$$\|g_t - g\|^2 = \langle g_t , g_t \rangle - 2 \Re \langle g_t, g \rangle + \langle g, g \rangle \stackrel{t\rightarrow 0}{\longrightarrow} 0 ,$$
which shows that $K$ is spanned by elements as $g$ with positive $s_m$. An inductive argument, using the same reasoning
and (\ref{eq:p_0}), shows that $K$ is spanned by elements as $g$ with positive $s_i$ for all $i$, thus $K = K_0$.

Since $p_0 = I_K$, equation (\ref{eq:p_0}) now translates to the weak operator convergence
$$\alpha_t(a) \rightarrow a \,\, \text{as} \,\, t \rightarrow 0 ,$$
for all $a\in B(H)$.
For all $k \in K$, we have
\begin{align*}
\|\alpha_t(a)k - a k \|^2 &=
\langle \alpha_t(a)^* \alpha_t(a) k, k\rangle - 2\Re\langle \alpha_t(a) k , ak \rangle + \|ak\|^2 \\
&= \langle \alpha_t(a^* a) k, k\rangle - 2\Re\langle \alpha_t(a) k , ak \rangle + \|ak\|^2 \\
&\stackrel{s\rightarrow 0}{\longrightarrow} 0,
\end{align*}
thus $\alpha_t(a) \rightarrow a$ in the strong operator topology as $t
\rightarrow 0$, for all $a \in B(H)$.
For all $s\in\diadp$, $\alpha_s$ is normal, thus
$\alpha_t(\alpha_s(a)) = \alpha_s(\alpha_t(a)) \rightarrow
\alpha_s(a)$ in the strong operator topology as $\diadp \ni t \rightarrow 0$.
Denote by $\cA$ the $C^*$-algebra generated by $\bigcup_{s\in\diadp} \alpha_s(B(H))$.
Then we conclude that for all $a \in \cA$, both $\beta_t(a)$ and $\gamma_t(a)$ (recall equation (\ref{eq:betagamma})) converge in the strong operator topology
to $a$ as $\dpp \ni t \rightarrow 0$.

As we have seen above, $W^*(\cA) = \cR = B(K)$, that is, $\cA$ is a $C^*$-algebra strong operator dense in $B(K)$. In particular, $\cA$ acts irreducibly on $K$.
Since $\cA \supseteq B(H)$, it contains nonzero compact operators, and now by
\cite[Proposition 10.4.10]{KR}, we conclude that
$\cA$ contains \emph{all} compact operators in $B(K)$. In order to use
Theorem \ref{thm:SeLegue} we must show that $\beta_t(1)$ and $\gamma_t(1)$ converge weakly to $1$ as $\dpp \ni t \rightarrow 0$. By a polarization argument, it is enough to show that that
\be\label{eq:weakconverge}
\langle \alpha_t(1) k, k \rangle \rightarrow \|k\|^2 \,\,\,\, \text{\rm as}  \,\,\,\, \diadp \ni t \rightarrow 0
\ee
for all $k$ in a total subset of $K$. But taking $h \in H$, $b \in B(K)$ and $s \in \diadpp$, we have that
for $t \leq s$,
$$\alpha_t(1) \alpha_s (b) h = \alpha_t(1) \alpha_s (1) \alpha_s (b) h = \alpha_s(b) h ,$$
because $\alpha_t(1) \leq \alpha_s(1)$. (\ref{eq:weakconverge}) follows. 
This means that we may use Theorem \ref{thm:SeLegue} to deduce that $\beta$ and $\gamma$ extend to E-semigroups $\hat{\beta}$ and $\hat{\gamma}$ over $\Rp$. By Proposition \ref{prop:continuity}, $\{\hat{\beta}_s \circ \hat{\gamma}_t\}_{(s,t)\in \Rpt}$ is a two-parameter E-semigroup dilating $P$.

As the statement regarding the separability of $K$ is clear, the proof is now complete.
\end{proof}

Using Proposition \ref{prop:autCP} we immediately obtain the following corollary.
\begin{corollary}\label{cor:CPaut}
Let $\{R_t\}_{t\geq0}$ and $\{S_t\}_{t\geq0}$ be two commuting CP-semigroups on $B(H)$, where $H$ is a separable Hilbert space. Assume that $R_t$ is a $*$-automorphism of $B(H)$ for all $t\geq 0$.Then the two-parameter CP-semigroup $P$ defined by
$$P_{(s,t)} := R_s S_t$$
has a minimal E-dilation $(K,u,B(K),\alpha)$, where $K$ is separable.
\end{corollary}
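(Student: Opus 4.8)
The plan is to reduce the corollary to the main theorem (Theorem~\ref{thm:main}) by verifying that the extra hypothesis---that $R$ consists of $*$-automorphisms---forces $R$ and $S$ to commute \emph{strongly}, and not merely to commute. Once strong commutativity is in hand, there is nothing further to do: Theorem~\ref{thm:main} applies verbatim to the pair $R,S$ on $B(H)$ and produces the minimal E-dilation $(K,u,B(K),\alpha)$ with $K$ separable, which is exactly the assertion.

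First I would record that each $R_t$ is, by hypothesis, simultaneously a CP map and a $*$-automorphism of $B(H)$. Being a CP map it is normal (normality is built into the definition of a CP map used in this paper), and being a $*$-automorphism it is in particular unital and invertible. Hence $R = \{R_t\}_{t \geq 0}$ is a semigroup of normal $*$-automorphisms of $B(H)$, while $S = \{S_t\}_{t \geq 0}$ is a CP-semigroup, and by assumption $R_s \circ S_t = S_t \circ R_s$ for all $s,t \geq 0$. These are precisely the hypotheses of Proposition~\ref{prop:autCP}, with $R$ in the role of the automorphism semigroup and $S$ in the role of the CP-semigroup.

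Applying Proposition~\ref{prop:autCP} then gives that $R$ and $S$ commute strongly. Since $R$ and $S$ are strongly commuting CP-semigroups on $B(H)$ with $H$ separable, Theorem~\ref{thm:main} furnishes a minimal E-dilation $(K,u,B(K),\alpha)$ of $P_{(s,t)} = R_s S_t$ with $K$ separable, completing the argument.

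There is no genuine obstacle here: the whole content of the corollary is carried by Proposition~\ref{prop:autCP} and Theorem~\ref{thm:main}, and the proof is a direct substitution of the given data into the hypotheses of those two results. The only point worth a moment's care is to confirm that the $*$-automorphism hypothesis is imposed on $R$, so that one invokes the automorphism version Proposition~\ref{prop:autCP} (which uses invertibility of the $R_s$) rather than the endomorphism version Proposition~\ref{prop:SCendo}; everything else is immediate.
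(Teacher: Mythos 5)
Your proof is correct and matches the paper's own argument exactly: the corollary follows by invoking Proposition~\ref{prop:autCP} to upgrade commutation to strong commutation (using that each $R_t$ is a normal $*$-automorphism), and then applying Theorem~\ref{thm:main}. Your additional remarks on normality of the $R_t$ and on why Proposition~\ref{prop:autCP} rather than Proposition~\ref{prop:SCendo} is the relevant result are accurate but not needed beyond the one-line deduction the paper gives.
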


\section{Acknowledgements}
This research is part of the author's 
PhD. thesis, done under the supervision of Baruch Solel, whose guidance and support are gratefully appreciated. The author also wishes to thank the referee for some helpful remarks. 


\end{document}